\documentclass[11pt,a4paper]{amsart}

\usepackage[T1]{fontenc}
\usepackage{lmodern}
\usepackage{mathtools,amssymb}
\usepackage[margin=29mm]{geometry}
\usepackage{microtype}
\usepackage{booktabs}
\usepackage[hidelinks]{hyperref}

\hypersetup{
  pdftitle={Index and nullity of critical spherical catenoids in hyperbolic three-space},
  pdfauthor={Ivan Dorofeev and Alexander Pigazzini},
  pdfsubject={Morse index four and nullity two for critical spherical catenoids},
  pdfkeywords={free boundary minimal surfaces, hyperbolic catenoids, Morse index, nullity, Robin spectrum},
  bookmarksnumbered=true
}

\newtheorem{theorem}{Theorem}
\newtheorem{proposition}[theorem]{Proposition}
\newtheorem{lemma}[theorem]{Lemma}
\newtheorem{corollary}[theorem]{Corollary}
\theoremstyle{remark}
\newtheorem{remark}[theorem]{Remark}

\newcommand{\HH}{\mathbb H}
\newcommand{\RR}{\mathbb R}
\newcommand{\II}{\mathrm{II}}
\newcommand{\dd}{\,\mathrm d}
\newcommand{\ind}{\operatorname{ind}}
\newcommand{\nul}{\operatorname{nul}}
\newcommand{\arcosh}{\operatorname{arcosh}}
\newcommand{\tr}{\operatorname{tr}}
\newcommand{\Span}{\operatorname{span}}
\newcommand{\qk}{\mathfrak q}

\numberwithin{equation}{section}
\title[Index and nullity of critical catenoids]
{Index and nullity of critical spherical catenoids\\
in hyperbolic three-space}
\author{Ivan Dorofeev}
\author{Alexander Pigazzini}
\date{}
\subjclass[2020]{Primary 53A10; Secondary 53C42, 58J50, 34B24}
\keywords{Free boundary minimal surfaces, hyperbolic space, critical catenoid, Morse index, nullity, Robin eigenvalue problem, equivariant nondegeneracy, constant mean curvature.}

\begin{document}

\begin{abstract}
We prove that every critical spherical catenoid in a geodesic ball of hyperbolic three-space has Morse index four and nullity two. This resolves Medvedev's index conjecture throughout the parameter range of the catenoid family. The kernel consists precisely of the normal components of rotations
that preserve the center of the ball and tilt the axis of the catenoid.
As consequences, the radius of the supporting ball increases strictly along the family, and each critical spherical catenoid admits a smooth family of free boundary constant mean curvature deformations in its supporting ball. The proof combines a global boundary inequality obtained from a Wronskian, a positive Dirichlet eigenfunction supplied by the time component of the unit normal, and a Riccati comparison for the angular mode $k=2$.
A ground-state identity then yields positivity of every angular mode $k\ge2$.
\end{abstract}

\maketitle

\section{Introduction}

The critical catenoid is a basic example in the study of the Morse index of free boundary minimal surfaces. In the Euclidean unit ball in $\RR^3$, its index is four, as proved by Devyver \cite{Devyver}, Smith and Zhou \cite{SmithZhou}, and Tran \cite{Tran}; another proof was given by Medvedev \cite{MedvedevIndex}. Devyver also proved that every compact orientable free boundary minimal surface in this ball, other than a flat disk, has index at least four \cite{Devyver}. Thus the critical catenoid attains this lower bound. Its nullity is two, and it is equivariantly nondegenerate: its Jacobi fields satisfying the free boundary condition are induced by rotations \cite[Lemma~8.1]{BettiolPiccioneSantoro}; see also \cite[Remark~8.2]{BettiolPiccioneSantoro}. Moreover, Parise and Zhu have recently shown that every properly embedded free boundary minimal annulus in this ball is congruent to the critical catenoid \cite{PariseZhu}.

The analogous index problem in curved space forms concerns critical spherical catenoids. In $\HH^3$, these are rotational minimal annuli obtained by free boundary truncation of the spherical catenoids constructed by Mori \cite{Mori}. The geometry of rotational hypersurfaces in spaces of constant curvature was developed further by do Carmo and Dajczer \cite{doCarmoDajczer}, and the free boundary truncation was
considered by Li and Xiong \cite{LiXiong}. Lima \cite{Lima} proved that an embedded free boundary minimal annulus in a geodesic ball of $\mathbb S^3_+$ or $\HH^3$ that is invariant under the geodesic symmetry about the center of the ball is congruent to a critical rotational annulus; in $\HH^3$, these are the critical spherical catenoids. Medvedev \cite{Medvedev}
proved that the critical spherical catenoid in a geodesic ball of $\mathbb S^3_+$ has index four, and that its hyperbolic counterpart has index at least four. He conjectured equality in the hyperbolic case.
We prove this equality throughout the parameter range of the critical spherical catenoid family in $\HH^3$ and determine the nullity.

The analysis in \cite{PigRobin} determines the index and nullity contributions of the angular mode $|k|=1$ for every parameter. The local result in \cite{PigLocal} establishes index four and nullity two when the parameter is sufficiently close to its lower endpoint. The theorem below establishes both equalities for every $a>\tfrac12$.

\begin{theorem}\label{thm:main}
For each $a>\tfrac12$, let $\Sigma_a$ be the critical spherical catenoid with angular period $2\pi$, parametrized in Section~\ref{sec:geometry}, and let $\mathbb B(r(a))\subset\HH^3$ be its supporting geodesic ball.
Then
\[
\ind(\Sigma_a)=4,\qquad \nul(\Sigma_a)=2.
\]
The Robin kernel is generated by the normal components of the two rotations that tilt the axis of the catenoid while fixing the center of $\mathbb B(r(a))$.
\end{theorem}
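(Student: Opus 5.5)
We will reduce the problem to Fourier modes and then control each mode separately. Write the Jacobi operator on $\Sigma_a$ in the rotational parametrization $(s,\theta)$ of Section~\ref{sec:geometry}, with $s\in[-L,L]$ the profile parameter and $\theta\in[0,2\pi)$. It has the form $J=-\Delta_\Sigma-(|\II|^2-2)$ with the Robin boundary condition $\partial_\nu u=q\,u$ on $\partial\Sigma_a$. Here $q$ is the second fundamental form of the geodesic sphere $\partial\mathbb B(r(a))$, a constant. Separating $u=\varphi(s)e^{ik\theta}$ gives a family of Sturm--Liouville problems $L_k\varphi=-\frac{1}{w}(w\varphi')'+\bigl(\frac{k^2}{\rho^2}-V\bigr)\varphi$ on $[-L,L]$, with Robin conditions at $\pm L$. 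Here $\rho(s)$ is the orbit radius, $w=\rho$ the weight, and $V=|\II|^2-2$ (up to the conformal factor). The index is $\sum_k \#\{\text{negative eigenvalues of }L_k\}$ and the nullity is $\sum_k\dim\ker L_k$. Since $L_k$ is increasing in $k^2$, we only need three ranges: $k=0$, $|k|=1$, and $|k|\ge2$.

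For $|k|=1$ we quote \cite{PigRobin}. It gives one negative eigenvalue for each of $k=\pm1$, so index contribution $2$. It also gives a one-dimensional kernel for each, spanned by the normal component of the tilting rotation about the center of the ball. That kernel function is odd in $s$ and vanishes at the waist, which accounts for nullity $2$ and for the description of the kernel in the theorem. For $k=0$ we must show exactly two negative eigenvalues and no kernel. The lower bound $\ge 2$ comes from Medvedev's index bound \cite{Medvedev} combined with the $|k|=1$ count; alternatively, the even and odd test functions built from the normal components of the dilation and boost fields each have negative energy. For the upper bound we split the $k=0$ problem by the reflection symmetry $s\mapsto -s$ into even and odd parts on $[0,L]$. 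On the odd part we use the time component $\langle N,e_0\rangle$ of the unit normal in the hyperboloid model. This is a Jacobi field, positive on the interior and vanishing only at $s=0$ up to symmetry, so it is a positive Dirichlet eigenfunction and shows the odd Dirichlet problem has no negative eigenvalue. Passing from Dirichlet to Robin adds at most one negative eigenvalue. We then need to show the Robin end data gives exactly one and no zero mode. For this we use a Wronskian between this field and the solution satisfying the Robin condition at $L$, and rule out kernel by a global boundary inequality that compares $q$ with the logarithmic derivative of the Jacobi field at $s=L$. The even part is handled the same way, using the even Jacobi field coming from the variation of $a$ in the family. The sign of its Robin defect is exactly $\frac{\mathrm d r}{\mathrm d a}\ne0$; the Wronskian identity shows this derivative is positive, and this yields one negative eigenvalue and no kernel.

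For $|k|\ge2$ we must show $L_k>0$. By monotonicity in $k^2$ it suffices to prove $L_2>0$ and then use a ground-state argument. If $\psi>0$ satisfies $L_2\psi=\mu\psi$ with $\mu\ge0$ and the Robin condition, then writing $\varphi=\psi f$ gives
\[
\langle L_k\varphi,\varphi\rangle=\int w\psi^2 f'^2+\int\Bigl(\mu+\frac{k^2-4}{\rho^2}\Bigr)w\psi^2f^2 .
\]
This is strictly positive for $k\ge2$ unless $\mu=0$, $k=2$ and $f$ is constant. Hence everything reduces to showing that the first Robin eigenvalue of $L_2$ is positive. Here we use a Riccati comparison. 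Setting $\eta=\varphi_1'/\varphi_1$, where $\varphi_1$ is the $k=1$ Jacobi field (the rotation field, nonvanishing on $(0,L]$), gives $\eta'+\eta^2+\frac{w'}{w}\eta=\frac{1}{\rho^2}-V$. For $k=2$, the corresponding Riccati inequality with the extra $3/\rho^2$ term lets us build a supersolution $\eta_2\ge\eta_1$ on $[0,L]$ that respects the Robin condition with strict inequality at $s=L$. This precludes a nonpositive eigenvalue. The point $s=0$, where the odd $k=1$ field vanishes, must be handled separately, for instance by using the even $k=1$ solution or by working on the full interval with a positive combination.

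\textbf{Main obstacle.} I expect the hardest step to be the $k=2$ Riccati comparison uniformly for all $a>\tfrac12$. The margin $3/\rho^2$ has to beat the Robin defect at the boundary for the whole parameter range, including $a\to\infty$ where $\rho$ grows. My first attempt is to compare against the explicit function $\rho\,\langle N,e_0\rangle$, or a power of $\rho$, and check the boundary inequality using the explicit formulas for $r(a)$ from Section~\ref{sec:geometry}.
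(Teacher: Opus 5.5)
Your architecture (Fourier reduction, the ranges $k=0$, $|k|=1$, $|k|\ge2$, monotonicity in $k^2$, a Riccati comparison and ground-state identity for $k=2$) matches the paper's, and quoting \cite{PigRobin} for $|k|=1$ is legitimate. However, both load-bearing steps have gaps. (Notation: $B$ is your $\rho$, $v=B'$, $K=\sqrt{a^2-\frac14}$, $c=\coth r$ is your $q$, $B_0=B(S)$, $v_0=v(S)$.)

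For $k=0$ you misdescribe the key function. The time component $\nu^0$ of the unit normal is not a Jacobi field: for minimal surfaces in $\HH^3\subset\RR^{3,1}$ one has $\Delta_\Sigma\nu=-|\II|^2\nu$, hence $J\nu^0=2\nu^0$. It is even, positive at the neck, and vanishes exactly at $s=\pm S$; that vanishing is the free boundary condition. The odd $k=0$ Jacobi field vanishing at the neck is $v$, which up to sign is the normal part of the hyperbolic translation along the axis. With it, your odd-sector count hinges on $v'(S)<c\,v(S)$, i.e.\ $B_0^2>2K^2$, which you only name as ``a global boundary inequality''. The paper proves it with the Wronskian $W=B(vg_1'-v'g_1)$, $g_1=X^1$, which satisfies $W(0)=0$ and $W'=B|\II|^2vg_1>0$. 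Your even-sector argument is circular: nothing gives $\mathrm dr/\mathrm da>0$ beforehand. In the paper it is \emph{deduced} from the absence of a $k=0$ Robin kernel, via $\psi'(S)-c\psi(S)=-(K/B_0^2)\,\mathrm dr/\mathrm da$. Even granting the sign, you would still need to rule out zeros of $\psi$ in $(0,S)$ to count eigenvalues. Correctly described, $\nu^0$ settles $k=0$ in both parities at once. It is a positive Dirichlet eigenfunction on $(-S,S)$, so the first Dirichlet eigenvalue is $2$. Since $H^1_0$ has codimension two in $H^1$, min--max gives $\lambda_3\ge2$ for the Robin problem. The coordinate functions $X^0$ (even) and $X^1$ (odd) satisfy $J_0X^i=-|\II|^2X^i$ and have negative radial energy after integrating by parts. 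This yields one negative eigenvalue per parity and no kernel, with no boundary inequality and no $\mathrm dr/\mathrm da$.

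For $k\ge2$ the strict Robin inequality for the positive even zero-energy solution $u_2$ is the entire difficulty, and you only assert it. A qualitative comparison $\eta_2\ge\eta_1$ cannot deliver it. In the conformal variable $t=\int_0^s\dd\sigma/B$ the Robin coefficient becomes $v_0$, and the $k=2$ and $k=1$ potentials differ by the constant $3$. The positive even $k=1$ Jacobi field $g_1'$ (normal part of a hyperbolic translation orthogonal to the axis) has boundary logarithmic derivative $p_1(T)=2B_0^2/v_0-v_0<v_0$. This is forced: by the ground-state identity, $p_1(T)\ge v_0$ would make the $k=1$ form nonnegative, contradicting its negative even eigenvalue. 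One therefore needs a quantitative lower bound for $\omega=p_2-p_1$, which solves $\dot\omega+(p_1+p_2)\omega=3$, $\omega(0)=0$, exceeding $v_0-p_1(T)=2(B_0^2-K^2)/(B_0^2v_0)$ for every $a$.

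The paper gets this from $\delta_2=p_2-v$, whose forcing term $v^2+2-\sqrt{(v^2-1)^2+4K^2}$ has the sign of $6v^2+4-4a^2$. This settles $a\le1$ directly. For $a>1$ it shows that $p_2(T)\le v_0$ would force $p_2<v$ on $(0,T)$, which together with $p_1<v$ gives $\omega(T)>3B_0^{-2}\int_0^SB\dd s$. The contradiction is then $\mathcal I(S)>0$ for $\mathcal I(s)=3\int_0^sB\dd\sigma-2(B^2-K^2)/v$. This is proved by a factorization of $\mathcal I'$ showing that $\mathcal I$ increases once $B^2>2K^2$, an elementary lower bound where $B^2=2K^2$, and again $B_0^2>2K^2$. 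None of this quantitative input is in your proposal, and your suggested comparison functions come with no check of the boundary inequality, so the $k\ge2$ step remains unproved.
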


Here the index and nullity refer to the second variation of area for normal variations whose boundary remains on the supporting sphere.
In particular, the ambient ball is fixed when taking variations.

The theorem supplies the global hyperbolic counterpart of the Euclidean index-four results and identifies all infinitesimal degeneracies with rotations of the ambient ball. At fixed ambient curvature $-1$, both the geometry of the annulus and its supporting radius vary with $a$.
The associated Jacobi operator contains the curvature term $+2$, and the Robin coefficient is $\coth r(a)$; these terms are recorded in Section~\ref{sec:spectral}. The proof controls this family of boundary value problems over the entire interval $a>\tfrac12$.

The proof has three main ingredients. A Wronskian identity gives a strict boundary inequality valid for every $a>\tfrac12$. The time component of the unit normal is a positive Dirichlet eigenfunction with eigenvalue two; a codimension-two min--max argument therefore determines the radial mode $k=0$. Finally, a comparison of logarithmic
derivatives, together with an explicit integral inequality, gives a boundary inequality for the mode $k=2$; a ground-state identity turns it into positivity of that mode, and monotonicity in $k^2$ completes the spectral count. The geometric and spectral identities used below are included in the proof.

Three consequences are recorded in Section~\ref{sec:consequences}.
The radius of the supporting ball is a strictly increasing function of $a$ (Corollary~\ref{cor:radius}). Combined with the equivariant implicit function theorem of Bettiol, Piccione and Santoro~\cite{BettiolPiccioneSantoro}, the theorem shows that each $\Sigma_a$ is the member with mean curvature zero of a smooth family of free boundary CMC annuli in its supporting ball (Corollary~\ref{cor:equivariant}). The restriction to angular period $2\pi$ is essential: for $m\ge2$, the $m$-fold cover of $\Sigma_a$ has index at least $4m$, and its nullity exceeds two exactly when $mk_*(a)$ is an integer, for a threshold frequency $k_*(a)\in(1,2)$ (Remark~\ref{rem:covers}). This agrees with the non-rotational free boundary minimal annuli of Cerezo~\cite{Cerezo}, which bifurcate from multiple covers of critical spherical catenoids.

\section{Geometry of the critical catenoids}\label{sec:geometry}

We use the hyperboloid model
\[
\HH^3=\{X\in\RR^{3,1}:\langle X,X\rangle_L=-1,\ X^0>0\},
\qquad
\langle X,Y\rangle_L=-X^0Y^0+\sum_{j=1}^3X^jY^j.
\]
The center of the balls under consideration is $e_0=(1,0,0,0)$.
Fix $a>\tfrac12$ and define
\begin{align}
K&=\sqrt{a^2-\tfrac14},&
A(s)^2&=a\cosh(2s)+\tfrac12,&
B(s)^2&=a\cosh(2s)-\tfrac12,\label{eq:AB}\\
\varphi(s)&=\int_0^s\frac{K}{A(\sigma)^2B(\sigma)}\dd\sigma.
\label{eq:phi}
\end{align}
The positive square roots are understood. The rotational parametrization is
\begin{equation}\label{eq:X}
X(s,\theta)=
\bigl(A\cosh\varphi,\ A\sinh\varphi,\ B\cos\theta,\ B\sin\theta\bigr),
\qquad
(s,\theta)\in\RR\times(\RR/2\pi\mathbb Z).
\end{equation}
A prime denotes differentiation with respect to $s$ throughout the paper. 
Set $v=B'$. The identities
\begin{gather}
A^2=1+B^2,\qquad A'=\frac{Bv}{A},\qquad
v=\frac{a\sinh(2s)}{B},\label{eq:basic}\\
v^2=1+B^2-\frac{K^2}{B^2},\qquad
v'=B+\frac{K^2}{B^3}>0
\label{eq:Bident}
\end{gather}
follow directly from \eqref{eq:AB}. In particular, $A$ and $B$ are even, whereas $v$ and $\varphi$ are odd.

For completeness, introduce the Lorentz-orthonormal frame
\[
E_0=(\cosh\varphi,\sinh\varphi,0,0),\quad
E_1=(\sinh\varphi,\cosh\varphi,0,0),\quad
E_r=(0,0,\cos\theta,\sin\theta),
\]
together with $E_\theta=(0,0,-\sin\theta,\cos\theta)$.
Then $X=AE_0+BE_r$ and
\[
X_s=A'E_0+A\varphi'E_1+vE_r,\qquad X_\theta=BE_\theta.
\]
Using \eqref{eq:phi}--\eqref{eq:Bident}, we obtain
\[
\langle X_s,X_s\rangle_L
=\frac{v^2+K^2/B^2}{A^2}=1,\qquad
\langle X_s,X_\theta\rangle_L=0.
\]
Consequently,
\begin{equation}\label{eq:metric}
g=ds^2+B^2d\theta^2.
\end{equation}
A unit normal is
\begin{equation}\label{eq:normal}
\nu=\frac KA E_0-\frac vA E_1+\frac KB E_r.
\end{equation}
Indeed, \eqref{eq:basic} and \eqref{eq:Bident} imply that $\nu$ is orthogonal to $X,X_s,X_\theta$ and has Lorentzian norm one. A direct computation using \eqref{eq:phi}--\eqref{eq:Bident} gives
\[
\nu_s=-\frac{K}{B^2}X_s,\qquad
\nu_\theta=\frac{K}{B^2}X_\theta.
\]
Thus, with shape operator $-d\nu$, the principal curvatures are
$K/B^2$ and $-K/B^2$. The surface is minimal and
\begin{equation}\label{eq:II}
|\II|^2=\frac{2K^2}{B^4}.
\end{equation}
The parametrization is embedded: the quotient
$X^1/X^0=\tanh\varphi(s)$ is strictly increasing in $s$, and, for each fixed $s$, the pair $(X^2,X^3)=B(s)(\cos\theta,\sin\theta)$ determines $\theta$ uniquely modulo $2\pi$ because $B(s)>0$.

\subsection{Free boundary truncation}

Write
\[
g_0(s)=A(s)\cosh\varphi(s)=X^0,\qquad
g_1(s)=A(s)\sinh\varphi(s)=X^1.
\]
For the distance $\rho$ from $e_0$, one has $X^0=\cosh\rho$ and
\[
\nabla^{\HH^3}X^0=X^0X-e_0,\qquad
\langle\nabla^{\HH^3}X^0,\nu\rangle_L=\nu^0.
\]
It follows that the surface meets a sphere centered at $e_0$
orthogonally precisely when $\nu^0=0$. At a point $s=S>0$, this
condition reads
\begin{equation}\label{eq:FB}
\tanh\varphi(S)=\frac{K}{v(S)}.
\end{equation}
As functions of $S>0$, the left-hand side increases strictly from zero, whereas the right-hand side decreases strictly from $+\infty$ to zero. Hence \eqref{eq:FB} has exactly one solution $S>0$.

The critical catenoid is
\[
\Sigma_a=X([-S,S]\times(\RR/2\pi\mathbb Z)).
\]
It is symmetric with respect to the totally geodesic plane
$\{X^1=0\}$, which contains the neck circle $s=0$, and its
supporting ball is centered at the point $e_0$ where this plane meets the axis $\{X^2=X^3=0\}$.
Set
\begin{equation}\label{eq:boundarynotation}
B_0=B(S),\qquad v_0=v(S),\qquad
r=\arcosh g_0(S),\qquad c=\coth r.
\end{equation}
Since $g_0'(s)>0$ for $s>0$, every interior point of $\Sigma_a$
lies strictly inside $\mathbb B(r)$.
At the positive boundary the outward unit conormal is $X_s$.
Orthogonality to the sphere gives
\[
X_s(S,\theta)=\frac{\cosh r\,X(S,\theta)-e_0}{\sinh r}.
\]
Taking components yields
\begin{gather}
g_0(S)=\cosh r,\qquad g_0'(S)=\sinh r,\qquad
g_1'(S)=c\,g_1(S),\label{eq:boundarycoordinates}\\
v_0=cB_0,\qquad
g_0'(S)-c\,g_0(S)=-\frac1{\sinh r}.
\label{eq:boundarycoefficient}
\end{gather}

\section{The quadratic form and separation of variables}\label{sec:spectral}

We use the convention $\Delta=\operatorname{div}\nabla$.
For the ambient Ricci tensor, $\operatorname{Ric}_{\HH^3}(\nu,\nu)=-2$.
If $N$ is the outward unit normal to the supporting sphere, then $\langle\nabla^{\HH^3}_V N,V\rangle_L=\coth r$ for every unit vector $V$ tangent to that sphere. Along the free boundary, $\nu$ is such a tangent vector. In the sign convention of \cite{Tran}, the second fundamental form of the boundary is computed with respect to the outward normal, so that its value on $\nu$ is $-\coth r$. The general free boundary second variation formula \cite[Section~2]{Tran} therefore gives
\begin{equation}\label{eq:Q}
Q[u]=
\int_{\Sigma_a}\bigl(|\nabla u|^2+(2-|\II|^2)u^2\bigr)\dd A
-c\int_{\partial\Sigma_a}u^2\dd\ell ,
\qquad u\in H^1(\Sigma_a).
\end{equation}
Its associated self-adjoint operator and boundary condition are
\[
J=-\Delta-|\II|^2+2,\qquad \partial_\eta u=cu,
\]
where $\eta$ is the outward unit conormal along $\partial\Sigma_a$.
The index is the number of negative eigenvalues, counted with
multiplicity; the nullity is the dimension of the kernel.
The form domain is $H^1(\Sigma_a)$ and imposes no Robin condition on test functions.

For angular frequency $k\ge0$, the radial operator is
\begin{equation}\label{eq:Jk}
J_k u=-\frac1B(Bu')'
+\left(2+\frac{k^2}{B^2}-\frac{2K^2}{B^4}\right)u
\end{equation}
in $L^2((-S,S),B\dd s)$, with boundary conditions
\begin{equation}\label{eq:Robin}
u'(S)=cu(S),\qquad -u'(-S)=cu(-S).
\end{equation}
After removing the positive angular integration factor, the radial form is
\begin{equation}\label{eq:qrad}
\begin{split}
\qk_k[u]={}&
\int_{-S}^{S}
\left(Bu'^2+
\left(2B+\frac{k^2}{B}-\frac{2K^2}{B^3}\right)u^2\right)\dd s\\
&-v_0\bigl(u(S)^2+u(-S)^2\bigr),
\qquad u\in H^1(-S,S).
\end{split}
\end{equation}
Each $k\ge1$ has the two real angular factors $\cos(k\theta)$ and $\sin(k\theta)$; the mode $k=0$ has one. The coefficients are even in $s$, so each radial problem splits into even and odd sectors.

For each fixed $a>\tfrac12$, one has $B(s)\ge\sqrt{a-\tfrac12}>0$ on the compact interval $[-S,S]$.
All radial problems are therefore regular Sturm--Liouville problems with positive smooth weight. We use the min--max principle, simplicity of radial eigenvalues, and the oscillation theorem: the $j$th eigenfunction, with eigenvalues numbered starting at $j=1$, has exactly $j-1$ interior zeros. These standard facts are discussed in \cite{Zettl}.

\section{A global boundary inequality}\label{sec:boundaryinequality}

For the flat connection $D$ of $\RR^{3,1}$, the Gauss formula reads
\[
D_U(dX(V))=dX(\nabla^\Sigma_UV)+\II(U,V)\nu+g(U,V)X.
\]
Tracing and using minimality gives $\Delta_\Sigma X=2X$.
The coordinate functions consequently satisfy
$\Delta_\Sigma X^j=2X^j$, so
\begin{equation}\label{eq:gode}
g_i''+\frac{B'}B g_i'-2g_i=0,\qquad
J_0g_i=-|\II|^2g_i,\qquad i=0,1.
\end{equation}

\begin{lemma}\label{lem:boundary}
For every $a>\tfrac12$, the boundary radius in the rotational
coordinates satisfies
\[
B_0^2>2K^2.
\]
\end{lemma}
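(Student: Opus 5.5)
The plan is to reduce the inequality to a monotone comparison at a single explicit value of $s$. The Wronskian is used first, to rewrite the boundary data in closed form; the actual proof then rests on bounding $\varphi$ at one point.

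\emph{Step 1: the Wronskian.} By \eqref{eq:gode}, both $g_0$ and $g_1$ solve $(Bg')'=2Bg$. Hence
\[
W=B\,(g_0g_1'-g_1g_0')
\]
is constant in $s$. At $s=0$ we have $\varphi=0$, $g_1=0$, $g_0=A(0)$, and
\[
g_1'(0)=A(0)\varphi'(0)=\frac{K}{A(0)B(0)},
\]
so $W=K$. At $s=S$, \eqref{eq:boundarycoordinates} and \eqref{eq:boundarycoefficient} give
\[
W=B_0g_1(S)\bigl(c\,g_0(S)-g_0'(S)\bigr)=\frac{B_0\,g_1(S)}{\sinh r},
\]
that is, $g_1(S)=K\sinh r/B_0$. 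Next, $g_0^2-g_1^2=A^2=1+B^2$ and $g_0(S)=\cosh r$ give $g_1(S)^2=\sinh^2r-B_0^2$. Combining the two expressions for $g_1(S)$ yields
\[
\sinh^2 r=\frac{B_0^4}{B_0^2-K^2},
\]
and in particular $B_0^2>K^2$. One checks that the relation $v_0=cB_0$ is then automatically consistent with \eqref{eq:Bident}. So the Wronskian alone gives only the weaker bound $B_0>K$, and the factor $2$ needs an additional quantitative input.

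\emph{Step 2: reduction to a point comparison.} Set
\[
F(s)=\tanh\varphi(s)-\frac{K}{v(s)},\qquad s>0.
\]
This function is strictly increasing: $\varphi$ increases, and $v$ increases by $v'>0$ in \eqref{eq:Bident}. Also $S$ is its unique zero by \eqref{eq:FB}. Since $B$ is increasing for $s>0$, the claim $B_0^2>2K^2$ is equivalent to $S>s_*$, where $B(s_*)^2=2K^2$. This in turn is equivalent to $F(s_*)<0$.

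From \eqref{eq:AB}, $a\cosh(2s_*)=2K^2+\tfrac12=2a^2$, so $\cosh(2s_*)=2a$. This lies above $1$ exactly because $a>\tfrac12$. Moreover $v(s_*)^2=2K^2+\tfrac12$. The lemma is thus equivalent to the explicit inequality
\[
\int_0^{s_*}\frac{K}{A^2B}\dd\sigma<\operatorname{artanh}\frac{K}{\sqrt{2K^2+1/2}},
\qquad \cosh(2s_*)=2a.
\]

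\emph{Step 3: the integral inequality, which is the main obstacle.} I would change variables to $b=B^2$. Using $\dd b=2Bv\dd s$ and \eqref{eq:Bident}, the integral becomes
\[
\int_{a-1/2}^{2K^2}\frac{K\dd b}{2(1+b)\sqrt{b(1+b)-K^2}}\,,
\]
where $b(1+b)-K^2=(b-b_-)(b-b_+)$ is a quadratic with root $b_-=a-\tfrac12$. This is an elementary integral, expressible through an inverse hyperbolic or circular function. I would first try to evaluate it exactly. Then I would compare both sides as functions of $a$: both vanish as $a\to\tfrac12$, and I would show that the derivative in $a$ of the difference has a fixed sign.

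If the closed form turns out to be messy, the fallback is a comparison-function argument. I would look for an explicit $h(s)$ with $h(0)=0$, $h(s_*)=K/v(s_*)$, and
\[
\bigl(\operatorname{artanh}h\bigr)'\ge \frac{K}{A^2B}\quad\text{on }(0,s_*),
\]
with strict inequality somewhere. A natural trial is $h=K\,g(B)/v$ for a correction factor $g$ chosen so that $g=1$ at $B^2=2K^2$.

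Strictness of the final inequality, uniformly for all $a>\tfrac12$ including the regime $a\to\infty$, is where I expect the real difficulty.
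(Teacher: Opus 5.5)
Your Steps 1 and 2 are correct. The Wronskian of $g_0$ and $g_1$ is exactly conserved and, as you note, only reproduces an identity consistent with $v_0=cB_0$ and \eqref{eq:Bident}, giving $B_0>K$. The reduction of $B_0^2>2K^2$ to $F(s_*)<0$ with $\cosh(2s_*)=2a$ is also valid. But the whole content of the lemma is in Step~3, and that step is only a plan, so the proposal does not prove the statement. The plan's primary route also rests on a miscomputed integrand. Since $B^2v=\sqrt b\,\sqrt{b(1+b)-K^2}$, one has
\[
\frac{K}{A^2B}\dd s=\frac{K\dd b}{2(1+b)\sqrt{b}\,\sqrt{b(1+b)-K^2}},
\]
so the radicand is the cubic $b(b-a+\tfrac12)(b+a+\tfrac12)$ with three simple roots. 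Thus $\varphi(s_*)$ is an incomplete elliptic integral of the third kind rather than an elementary expression, and "evaluate exactly, then differentiate in $a$" is not available as you envisage. The fallback comparison function $h$ is never specified, and you yourself leave strictness for all $a>\tfrac12$ open.

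The missing idea is to pair $g_1$ not with $g_0$, which solves the same equation, but with a solution of a different equation, so that the Wronskian becomes monotone rather than constant. Take $v=B'$, which up to sign is the normal component of the hyperbolic translation along the axis. From $v'=B+K^2/B^3$ one gets $v''=v-3K^2v/B^4$, hence $J_0v=0$, whereas $J_0g_1=-|\II|^2g_1$. Then $W=B(vg_1'-v'g_1)$ satisfies $W(0)=0$ and $W'=B|\II|^2vg_1>0$ on $(0,S]$. At $s=S$, the relations $g_1'(S)=c\,g_1(S)$ and $c=v_0/B_0$, together with \eqref{eq:Bident}, give
\[
0<W(S)=-B_0g_1(S)\bigl(v'(S)-cv_0\bigr),\qquad v'(S)-cv_0=\frac{2K^2-B_0^2}{B_0^3}.
\]
Since $g_1(S)>0$, this yields $B_0^2>2K^2$. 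This bypasses the integral $\varphi(s_*)$ entirely. The positive difference $|\II|^2$ between the two potentials supplies exactly the quantitative input your Step~3 was trying to extract.
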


\begin{proof}
By \eqref{eq:Bident},
\[
v''=v-\frac{3K^2v}{B^4},
\]
and substitution in \eqref{eq:Jk} gives $J_0v=0$.
Moreover, $v(0)=0$ and $v>0$ on $(0,S]$. Define
\[
W=B(vg_1'-v'g_1).
\]
The equations for $v$ and $g_1$ imply
\[
W(0)=0,\qquad W'=B|\II|^2vg_1>0\quad\hbox{on }(0,S].
\]
Using \eqref{eq:boundarycoordinates}, we find
\[
0<W(S)=-B_0g_1(S)\bigl(v'(S)-cv_0\bigr).
\]
On the other hand, \eqref{eq:Bident} and $c=v_0/B_0$ give
\[
v'(S)-cv_0
=B_0+\frac{K^2}{B_0^3}-\frac{v_0^2}{B_0}
=\frac{2K^2-B_0^2}{B_0^3}.
\]
Since $B_0g_1(S)>0$, the asserted inequality follows.
\end{proof}

\section{The modes zero and one}\label{sec:lowmodes}

\subsection{A Dirichlet eigenfunction}

\begin{lemma}\label{lem:normal}
The first Dirichlet eigenvalue of $J_0$ on $(-S,S)$ is $2$.
A corresponding positive eigenfunction is
\[
n(s)=\nu^0(s)
=\frac{K\cosh\varphi(s)-v(s)\sinh\varphi(s)}{A(s)}.
\]
\end{lemma}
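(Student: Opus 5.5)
The plan has four steps: show that $n$ solves the Jacobi equation with eigenvalue $2$, check that it vanishes at $\pm S$, check that it is positive inside, and then invoke Sturm--Liouville theory.

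\emph{Step 1: the eigenvalue equation.} The starting point is the standard Jacobi-field identity for the normal components of ambient Killing-type vectors. In the hyperboloid model, a fixed vector $e\in\RR^{3,1}$ gives the function $\langle \nu,e\rangle_L$ on a minimal surface in $\HH^3$. Differentiating $\nu$ by the Weingarten equation, and using Codazzi and minimality, gives
\[
\Delta_\Sigma\langle\nu,e\rangle_L=-|\II|^2\langle\nu,e\rangle_L .
\]
There is no $X$-term here, because $D_U\nu=-dX(S_\nu U)$ has no component along $X$ (as $\langle\nu,X\rangle_L=0$). Consequently
\[
J\langle\nu,e\rangle_L=2\langle\nu,e\rangle_L .
\]
Alternatively, one can verify this directly from $\nu_s=-(K/B^2)X_s$ and $\nu_\theta=(K/B^2)X_\theta$. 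Taking the second derivative and using the Gauss formula gives
\[
\nu_{ss}=-(K/B^2)'X_s-(K/B^2)\bigl(X_{ss}\bigr),
\]
where $X_{ss}$ has normal part $\II(\partial_s,\partial_s)\nu=(K/B^2)\nu$ and a component along $X$. The $X$-components cancel against the angular term in $\Delta$, because the two principal curvatures are opposite. Taking $e=-e_0$, so that $n=\nu^0$ is $\theta$-independent, yields $J_0 n=2n$ on $(-S,S)$. The explicit formula follows from \eqref{eq:normal}, since the $0$-components of $E_0,E_1,E_r$ are $\cosh\varphi$, $\sinh\varphi$, $0$.

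\emph{Step 2: Dirichlet conditions.} By \eqref{eq:FB}, $\nu^0(S)=0$. Since $v$ and $\varphi$ are odd and $A$ is even, $n$ is even, so $n(-S)=0$ as well. Hence $n$ is a Dirichlet eigenfunction with eigenvalue $2$.

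\emph{Step 3: positivity in the interior.} We need $n>0$ on $(-S,S)$. Because $n$ is even, it suffices to show $K\cosh\varphi-v\sinh\varphi>0$ for $0\le s<S$, which is equivalent to $\tanh\varphi(s)<K/v(s)$. At $s=0$ the left-hand side vanishes while the expression equals $K>0$. By the monotonicity used after \eqref{eq:FB}, $\tanh\varphi$ increases strictly while $K/v$ decreases strictly on $(0,\infty)$. Hence the inequality holds exactly on $[0,S)$, with equality only at $S$.

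\emph{Step 4: conclusion.} For the regular Dirichlet Sturm--Liouville problem, an eigenfunction with no interior zeros is the first eigenfunction, by the oscillation theorem cited in Section~\ref{sec:spectral}. Therefore the first Dirichlet eigenvalue is $2$.

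The main obstacle is Step 1: carrying out the Jacobi identity cleanly in the Lorentzian setting without sign errors. The safest route is the direct radial computation. Write $n=\langle\nu,-e_0\rangle_L$. From $n'=-(K/B^2)\langle X_s,-e_0\rangle_L=(K/B^2)g_0'$, compute
\[
\frac1B(Bn')'=\frac1B\bigl((K/B)g_0'\bigr)' .
\]
Then use \eqref{eq:gode}, $g_0''+(B'/B)g_0'=2g_0$, together with the relation between $g_0$, $g_0'$ and $n$ coming from the frame decomposition, to reduce the claim to
\[
-\frac1B(Bn')'=\Bigl(\frac{2K^2}{B^4}\Bigr)n .
\]
Given the earlier identities \eqref{eq:basic}--\eqref{eq:Bident}, this is a routine if tedious check.
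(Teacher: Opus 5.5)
Your proposal is correct and follows the paper's proof step for step: $\Delta_\Sigma\nu=-|\II|^2\nu$ from Gauss--Weingarten, Codazzi and minimality, Dirichlet data from \eqref{eq:FB} and evenness, interior positivity from the unique crossing in \eqref{eq:FB}, and the oscillation theorem. Two small slips do not affect this, since your Gauss-formula computation in the ``Alternatively'' paragraph already settles Step~1:
\begin{itemize}
\item The $X$-term in $\Delta_\Sigma\nu$ is $-(\tr\mathcal A)X$. It is present in general and vanishes by minimality, as that direct computation of yours shows. It does not vanish because $D\nu$ has no $X$-component.
\item In your last paragraph, $\langle X_s,-e_0\rangle_L=g_0'$, so $n'=-(K/B^2)g_0'$. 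With your sign the reduction would fail. With the correct sign it closes via $Bvg_0'-B^2g_0+Kn=0$, which follows from $B^2v^2=A^2B^2-K^2$.
\end{itemize}
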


\begin{proof}
The Gauss--Weingarten equations in $\RR^{3,1}$ imply
\begin{equation}\label{eq:normalLap}
\Delta_\Sigma\nu=-|\II|^2\nu
\end{equation}
for a minimal surface in $\HH^3$.
To see the signs explicitly, let $\mathcal A$ denote the shape operator. 
In a geodesic orthonormal frame on the surface, tracing the derivative of $D\nu=-dX\circ\mathcal A$ gives
\[
\Delta_\Sigma\nu
=-dX(\operatorname{div}\mathcal A)
-|\II|^2\nu-(\tr\mathcal A)X.
\]
The Codazzi equation in constant curvature gives
$\operatorname{div}\mathcal A=\nabla(\tr\mathcal A)$, and both trace terms vanish by minimality. This proves \eqref{eq:normalLap}.
Taking its time component gives $J_0n=2n$.

The function $n$ is even and $n(0)=K/A(0)>0$. For $0<s<S$,
the uniqueness of the crossing in \eqref{eq:FB} gives
$\tanh\varphi(s)<K/v(s)$, so $n(s)>0$. At $s=\pm S$,
the free boundary condition gives $n=0$. Thus $n$ is a
Dirichlet eigenfunction with no interior zero. The oscillation
theorem identifies its eigenvalue as the first one.
\end{proof}

\begin{proposition}\label{prop:modezero}
The radial Robin problem for $J_0$ has exactly two negative
eigenvalues and no zero eigenvalue. One negative eigenfunction
is even and the other is odd.
\end{proposition}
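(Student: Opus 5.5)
We split the radial Robin problem for $J_0$ into its even and odd sectors, which the evenness of the coefficients allows. We show that each sector has exactly one negative Robin eigenvalue and that neither has a zero eigenvalue. The lower bounds come from explicit test functions. The upper bound, and the exclusion of zero, come from the Dirichlet eigenfunction $n$ of Lemma~\ref{lem:normal} through a codimension-two min--max argument.

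\emph{Lower bounds.} In the odd sector we use $g_1$, which is odd. By \eqref{eq:gode} and integration by parts,
\[
\qk_0[g_1]=\int_{-S}^{S}B\bigl(g_1J_0g_1\bigr)\dd s+2B_0g_1(S)\bigl(g_1'(S)-cg_1(S)\bigr)=-\int_{-S}^S B|\II|^2g_1^2\dd s<0,
\]
since $g_1'(S)=cg_1(S)$ by \eqref{eq:boundarycoordinates} and the form in \eqref{eq:qrad} carries the factor $B$ (recall $v_0=cB_0$). In the even sector we use $g_0$. The same computation gives
\[
\qk_0[g_0]=-\int B|\II|^2g_0^2\,\dd s+2B_0g_0(S)\bigl(g_0'(S)-cg_0(S)\bigr),
\]
and by \eqref{eq:boundarycoefficient} the boundary term is $-2B_0g_0(S)/\sinh r<0$. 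Both quantities are negative, so each sector has at least one negative eigenvalue. Alternatively, the constant function already works in the even sector.

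\emph{Upper bounds.} Suppose the Robin form were nonpositive on a $3$-dimensional subspace $V\subset H^1(-S,S)$. Since $H^1_0$ has codimension two in $H^1$, the intersection $V\cap H^1_0$ contains some $u\ne0$. On $H^1_0$ the boundary term vanishes and $\qk_0[u]=\int B\,u\,J_0u\ge 2\int Bu^2>0$, because the Dirichlet bottom is $2$ by Lemma~\ref{lem:normal}. This contradicts $\qk_0[u]\le0$. Hence the number of nonpositive Robin eigenvalues is at most two. Combined with the lower bounds, there are exactly two negative eigenvalues, one even and one odd, and no zero eigenvalue: a zero eigenvalue would add a third dimension to the nonpositive space. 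To make the last point precise, take $V$ spanned by the two negative eigenfunctions and a kernel element. This space is $\qk_0$-nonpositive, since the eigenfunctions are $\qk_0$-orthogonal.

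\emph{Main obstacle.} The delicate point is bookkeeping, not analysis. The sign of the Robin boundary term must match \eqref{eq:qrad}, namely $B(S)\,u'u$ at the endpoints versus $v_0u^2$ with $v_0=cB_0$. The codimension argument must also be run on the full interval rather than sector by sector. Per sector it also works, since $H^1_0$ has codimension one in each symmetric sector. Lemma~\ref{lem:boundary} is not needed for this mode.
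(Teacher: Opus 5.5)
Your argument is correct and is essentially the paper's proof: $g_0$ and $g_1$ give a two-dimensional negative subspace split by parity. The codimension-two intersection with $H^1_0(-S,S)$, together with the Dirichlet bottom $2$ from Lemma~\ref{lem:normal}, then excludes a third nonpositive direction, which gives the count, the absence of a kernel, and the parity statement.

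The one inaccuracy is your aside that the constant function also works in the even sector. Using $v'=B+K^2/B^3$ one finds $\qk_0[1]=2v_0-4\int_{-S}^{S}K^2B^{-3}\dd s$, which is positive for large $a$: the integral is $O(\sqrt a)$, while $v_0=cB_0>\sqrt2K$. So $g_0$ is genuinely needed there.
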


\begin{proof}
Integrating \eqref{eq:gode} by parts in \eqref{eq:qrad}, and using
\eqref{eq:boundarycoordinates}--\eqref{eq:boundarycoefficient}, gives
\begin{align}
\qk_0[g_0]
&=-\int_{-S}^{S}B|\II|^2g_0^2\dd s-2B_0\coth r<0,
\label{eq:negativeg0}\\
\qk_0[g_1]
&=-\int_{-S}^{S}B|\II|^2g_1^2\dd s<0.
\label{eq:negativeg1}
\end{align}
Since $g_0$ is even and $g_1$ is odd, their mixed term in the
polarized quadratic form is zero. Hence their span is a
two-dimensional negative subspace.

Let $\lambda_1<\lambda_2<\lambda_3<\cdots$ be the radial Robin
eigenvalues. The preceding subspace gives $\lambda_2<0$.
The Dirichlet form domain $H^1_0(-S,S)$ has codimension two in
$H^1(-S,S)$, since its defining conditions are the two endpoint
traces. Therefore the span of the first three Robin eigenfunctions contains a nonzero $z\in H^1_0(-S,S)$. Lemma~\ref{lem:normal}
and the min--max principle yield
\[
2\int_{-S}^{S}Bz^2\dd s
\le \qk_0[z]
\le \lambda_3\int_{-S}^{S}Bz^2\dd s.
\]
Thus $\lambda_3\ge2$. This proves the exact count and excludes
a kernel. The negative tests \eqref{eq:negativeg0} and
\eqref{eq:negativeg1} supply one negative direction of each parity, so each parity contributes exactly one negative eigenvalue.
\end{proof}

\subsection{The first angular mode}

\begin{proposition}\label{prop:modeone}
The radial Robin problem for $J_1$ has one negative eigenvalue
and a one-dimensional kernel generated by $g_0'$. The negative
eigenfunction is even and the kernel function is odd.
\end{proposition}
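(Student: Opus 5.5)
Since the Robin problem for $J_1$ has even coefficients, I will treat the odd and even sectors separately, using the explicit Jacobi fields coming from rotations.

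\emph{Odd sector.} The first task is to check that $w=g_0'$ lies in the kernel. Differentiating \eqref{eq:gode} for $g_0$ gives
\[
w''+\frac{B'}{B}w'+\Bigl(\frac{B'}{B}\Bigr)'w-2w=0,
\]
and $(B'/B)'=v'/B-v^2/B^2=1/B^2-2K^2/B^4$ by \eqref{eq:Bident}. Hence
\[
J_1w=-w''-\frac{B'}{B}w'+\Bigl(2+\frac1{B^2}-\frac{2K^2}{B^4}\Bigr)w=0.
\]
Geometrically, $g_0'$ is the radial profile of the normal component of a rotation tilting the axis about $e_0$. For the boundary condition, I use the ODE: at $s=S$ we have $g_0''=2g_0-(v_0/B_0)g_0'=2\cosh r-c\sinh r$. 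The Robin condition $g_0''(S)=c\,g_0'(S)$ therefore reduces to $2\cosh r=2c\sinh r$, which holds because $c=\coth r$. The function $g_0'$ is odd, so the condition at $-S$ follows by symmetry. Since $g_0'>0$ on $(0,S]$, it is the first odd eigenfunction, having only the zero at $s=0$. Every odd eigenfunction is a Dirichlet eigenfunction at $0$ on $(0,S)$ with the Robin condition at $S$, and $g_0'$ is positive there. It is therefore the ground state of that half-interval problem. Hence the odd sector contains exactly one zero eigenvalue and no negative eigenvalues.

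\emph{Even sector.} I need a negative test function and an upper bound on the count. For the test function, take $v$, which satisfies $J_0v=0$ by the proof of Lemma~\ref{lem:boundary}. For mode $1$, take $u=g_1'$ or $B$ instead. Indeed, $B=\cosh$-type profile of the translation field: the normal component of a boost moving along the axis is $\langle\nu,\text{Killing}\rangle$, which gives radial profile $K/B\cdot$(something) for $k=1$. The cleanest candidate is $u=K/B$, the profile of $\nu^2$. By \eqref{eq:normalLap}, $\Delta\nu^2=-|\II|^2\nu^2$, so $J_1(K/B)=2K/B$ and $K/B$ is even and positive. Integrating by parts gives
\[
\qk_1[K/B]=2\int BK^2/B^2\dd s+2B_0\Bigl(u u'-c u^2\Bigr)(S)\quad\text{up to sign conventions}.
\]
With $u'=-Kv/B^2$ this becomes $2\int K^2/B\dd s-4K^2c/B_0$. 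Its sign is unclear, so I would instead use an $x^2$-component test function. The more robust choice is the even function $g_0'$-companion $u=g_1$-derivative $g_1'$: a computation analogous to the one above shows $J_1g_1'=0$ in the interior. Because $g_1'(S)=c\,g_1(S)$ but $g_1''(S)=2g_1-cg_1'=g_1(2-c^2)$, we get $\partial u-cu=g_1(S)(2-2c^2)<0$. This gives $\qk_1[g_1']=B_0\,u(S)\cdot 2\cdot(\text{that defect})$ with a sign making it negative, so the even sector has at least one negative eigenvalue.

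For the upper bound, $g_1'$ solves the interior equation and is even. Whether $g_1'$ has interior zeros depends on $\varphi$, so I would count zeros of the Robin-defect solution via Sturm comparison. Sturm comparison with the solution $g_1'$, whose Robin defect has a sign, places exactly one even eigenvalue below $0$ provided $g_1'>0$ on $[0,S]$, which holds since $g_1'=A'\sinh\varphi+A\varphi'\cosh\varphi>0$ for $s\ge0$. Prüfer-angle monotonicity then bounds the second even eigenvalue above $0$. I expect this step to be the main obstacle: getting the signs in the boundary defect exactly right and checking the positivity of $g_1'$.
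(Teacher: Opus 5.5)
Your argument is correct in substance, but in the even sector it takes a longer route than the paper. The paper does not split by parity. It notes that $g_0'$ is a Robin eigenfunction on the whole interval $(-S,S)$ with eigenvalue $0$ and exactly one interior zero. By the oscillation theorem, $0$ is then the second eigenvalue and it is simple. This gives exactly one negative eigenvalue at once, and the positive first eigenfunction is even by simplicity and the reflection $s\mapsto -s$.

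Your odd-sector step, that $g_0'$ is the positive ground state of the Dirichlet--Robin problem on $(0,S)$, is the same observation on the half-interval. Read on the full interval, it already settles the even sector.

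You instead treat the even sector with the second Jacobi field $g_1'$. It is even and positive, satisfies $J_1g_1'=0$, and has Robin defect $g_1''(S)-c\,g_1'(S)=2(1-c^2)g_1(S)<0$, so $\qk_1[g_1']=4cB_0(1-c^2)g_1(S)^2<0$. Your Sturm/Pr\"ufer step can be made precise as follows. Write $u=\rho\sin\vartheta$, $Bu'=\rho\cos\vartheta$ with $\vartheta(0)=\pi/2$, and let $\beta\in(0,\pi/2)$ satisfy $\cot\beta=v_0$. Positivity of $g_1'$ keeps $\vartheta(S;0)$ in $(0,\pi)$, and the sign of the defect gives $\vartheta(S;0)>\beta$. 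The even eigenvalues are the $\lambda$ with $\vartheta(S;\lambda)\in\beta+\pi\mathbb Z$, and $\vartheta(S;\lambda)$ increases in $\lambda$. Hence exactly one even eigenvalue is negative and the next one is positive. Alternatively, $0$ is not an even eigenvalue because every even zero-energy solution is a multiple of $g_1'$, which fails the Robin condition.

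What your route buys is a geometric picture of the splitting. The function $g_1'\cos\theta$ is the normal component of the boost $X\mapsto(X^2,0,X^0,0)$, a translation moving $e_0$ off the axis. That field does not preserve the ball and violates the Robin condition with a definite sign, whereas the tilts $Z_2,Z_3$ preserve the ball and produce the kernel. The paper's route is shorter and needs only $g_0'$; it uses $g_1'$ later, in Lemma~\ref{lem:logderivative}.

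Two corrections. First, the identity you display has the wrong sign: $(B'/B)'=2K^2/B^4-1/B^2$, as in the paper. With your sign the computation would give $J_1g_0'=(2/B^2-4K^2/B^4)g_0'$ rather than $0$. Second, remove the detours through $v$, $K/B$ and ``boosts along the axis''. The function $K/B$ is the profile of $\nu^2$, satisfies $J_1(K/B)=2K/B$, and plays no role in the argument.
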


\begin{proof}
The identity
\[
\left(\frac{B'}B\right)'
=\frac{2K^2}{B^4}-\frac1{B^2}
\]
shows, upon differentiating \eqref{eq:gode}, that
\begin{equation}\label{eq:derivative}
J_1g_0'=0,\qquad J_1g_1'=0.
\end{equation}
Set $f=g_0'$. The function $f$ is odd, positive on $(0,S]$,
and satisfies $f'(0)=2g_0(0)>0$. At $s=S$,
\[
f'(S)=2g_0(S)-c\,g_0'(S)
=2\cosh r-c\sinh r
=\cosh r=c\,f(S).
\]
Parity gives the other boundary condition in \eqref{eq:Robin}.
Thus $f$ is a Robin eigenfunction with eigenvalue zero and
exactly one interior zero. By the oscillation theorem, zero is
the second radial eigenvalue, and it is simple. There is
exactly one negative eigenvalue.

A first eigenfunction can be chosen strictly positive. Reflection $s\mapsto-s$ preserves the problem, so simplicity makes that eigenfunction even. The kernel generator $f$ is odd.
\end{proof}

\section{Positivity of the higher angular modes}\label{sec:highermodes}

Define the conformal coordinate
\[
t=t(s)=\int_0^s\frac{\dd\sigma}{B(\sigma)},\qquad T=t(S).
\]
A dot denotes differentiation with respect to $t$; primes continue to denote differentiation with respect to $s$. We use the same letters for functions composed with $s(t)$.
Equation \eqref{eq:Bident} gives
\begin{equation}\label{eq:conformal}
\dot B=Bv,\qquad
\dot v=\mathcal D(v):=\sqrt{(v^2-1)^2+4K^2},\qquad
v(0)=0,\quad \mathcal D(0)=2a.
\end{equation}
Indeed, $\dot v=Bv'=B^2+K^2/B^2>0$, and the square of this
expression is $(v^2-1)^2+4K^2$.

In this coordinate the zero-eigenvalue equation is
\begin{equation}\label{eq:zero}
\ddot u=(2v^2+k^2-2)u,
\end{equation}
and the full radial form is
\begin{equation}\label{eq:qconformal}
\qk_k[u]=
\int_{-T}^{T}\bigl(\dot u^2+(2v^2+k^2-2)u^2\bigr)\dd t
-v_0\bigl(u(T)^2+u(-T)^2\bigr).
\end{equation}
The norm in the spectral problem is
\begin{equation}\label{eq:weight}
\int_{-S}^{S}Bu^2\dd s=\int_{-T}^{T}B^2u^2\dd t.
\end{equation}
In particular, the conformal change preserves the form and the
zero equation; the transformed eigenvalue problem has weight $B^2$.

\subsection{A boundary logarithmic derivative}

Let $u_2$ solve
\begin{equation}\label{eq:u2}
\ddot u_2=(2v^2+2)u_2,\qquad u_2(0)=1,\quad \dot u_2(0)=0.
\end{equation}
The coefficient is strictly positive. While $u_2$ is positive,
$\ddot u_2>0$, so $u_2$ increases from $1$ and cannot have a
first zero. Hence $u_2>0$ on $[0,T]$ and $\dot u_2>0$ on $(0,T]$.
Define
\[
p_2=\frac{\dot u_2}{u_2},\qquad \delta_2=p_2-v.
\]

\begin{lemma}\label{lem:logderivative}
For every $a>\tfrac12$, one has $p_2(T)>v_0$.
\end{lemma}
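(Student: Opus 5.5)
The plan is to turn the claim into a sign statement for $\delta_2(T)$ via a Riccati equation, and then reduce that sign to an explicit one-variable integral inequality, using Lemma~\ref{lem:boundary} to control the endpoint $v_0$.

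\textbf{Riccati equation for $\delta_2$.} From \eqref{eq:u2}, $p_2$ satisfies
\[
\dot p_2=2v^2+2-p_2^2,\qquad p_2(0)=0.
\]
Subtracting $\dot v=\mathcal D(v)$ from \eqref{eq:conformal} gives
\[
\dot\delta_2+(p_2+v)\,\delta_2=F(v),\qquad F(v):=v^2+2-\mathcal D(v),\qquad \delta_2(0)=0.
\]
The factor $p_2+v$ is positive on $(0,T]$, because $\dot u_2>0$ there and $v>0$. With the integrating factor $w(t)=\exp\int_0^t(p_2+v)$, which is positive and increasing, we get
\[
w(T)\,\delta_2(T)=\int_0^T F(v(t))\,w(t)\dd t.
\]
So it suffices to show that this weighted integral is positive.

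\textbf{Sign structure of $F$.} Regard $F$ as a function of $x=v^2$; its derivative is $1-(x-1)/\mathcal D>0$, so $F$ is strictly increasing in $v\ge0$. Since $v$ is increasing in $t$, $F(v(t))$ changes sign at most once, from negative to positive. Solving $F=0$ gives $(v^2+2)^2-(v^2-1)^2=4K^2$, that is, $v^2=\tfrac23(a^2-1)$. The weight $w$ is increasing, so the standard one-crossing argument applies. Let $t_*$ be the crossing time, or $t_*=0$ if $F\ge0$ throughout. Then
\[
\int_0^T Fw\dd t\ \ge\ w(t_*)\int_0^T F\dd t .
\]
The problem therefore reduces to the unweighted inequality $\int_0^T F(v)\dd t>0$, which may be strict or merely nonnegative as long as it combines with strictness elsewhere. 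Substituting $\dd t=\dd v/\mathcal D(v)$, this becomes
\[
\int_0^{v_0}\frac{v^2+2}{\sqrt{(v^2-1)^2+4K^2}}\dd v\ >\ v_0 .
\]

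\textbf{The main obstacle: the explicit integral inequality.} The left-hand side minus $v_0$, viewed as a function of the upper limit $y$, vanishes at $y=0$. It first decreases when $a>1$, and it increases once $y^2>\tfrac23(a^2-1)$. So one needs a lower bound on $v_0$ that is large enough. Lemma~\ref{lem:boundary} supplies it: $v_0=cB_0>B_0>\sqrt2K$, since $c=\coth r>1$. Hence it suffices to prove
\[
G(y):=\int_0^y\Bigl(\frac{v^2+2}{\mathcal D(v)}-1\Bigr)\dd v>0\qquad\text{for all } y\ge\sqrt{2}\,K .
\]
Because $G$ is increasing beyond the zero of $F$, and $2K^2>\tfrac23(a^2-1)$, only the single value $y=\sqrt2K$ needs checking.

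My first attempt at $G(\sqrt2K)>0$ would rescale $v=Ky$ for large $a$. The integrand then tends to $(y^2-2|y^2-1/K^2|)$-type comparisons, and explicit bounds should follow: the elementary bound $\mathcal D\le |v^2-1|+2K$ on the negative part, and $\mathcal D\le v^2+2K-1$ on the positive part. For small $a$, where $a\le1$, $F>0$ everywhere and nothing needs to be proved. If the crude bounds fail in an intermediate range, I would instead keep the weight, using $w\ge B^2/B(0)^2$ type growth since $p_2\ge v$ eventually, or use the sharper endpoint $v_0=cB_0$.
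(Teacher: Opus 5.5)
Your Riccati equation for $\delta_2$ and your sign analysis of $F$ agree with the paper. The argument breaks at the reduction to the unweighted inequality, and no better lower bound on $v_0$ can repair it. The one-crossing estimate $\int_0^T Fw\,\dd t\ge w(t_*)\int_0^T F\,\dd t$ is correct, but the sufficient condition it leaves you with is false for large $a$.

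Write $G(y)=\int_0^y\bigl(\frac{v^2+2}{\mathcal D(v)}-1\bigr)\dd v$, so you need $G(v_0)>0$. Three bounds on the integrand settle this:
\begin{itemize}
\item Since $v^2-1<\mathcal D(v)$, the integrand is bounded above by $3/\mathcal D(v)$ everywhere.
\item Since $\mathcal D\ge2K$, it is at most $\frac3{2K}-1$ on $[0,1]$.
\item Since $\mathcal D\ge\max\{2K,|v^2-1|\}$, one has $\int_1^\infty\mathcal D^{-1}\dd v=O(K^{-1/2})$.
\end{itemize}
Hence $G(y)\le\frac3{2K}-1+O(K^{-1/2})<0$ for every $y\ge1$ once $a$ is large, and $G(y)<0$ trivially for $0<y<1$. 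So $\int_0^TF\,\dd t<0$ whatever $T$ is.

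The failure is not only asymptotic. At $a=2$, a direct quadrature gives $G(\sqrt2K)\approx-0.19$, so your single-point check already fails there.

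The reason is structural. The negative part of $F$ sits near $v=0$, where $F/\mathcal D\approx-1$ for large $a$, whereas $0<F<3$ on the positive part. The lemma holds only because the integrating factor $w=\exp\int_0^t(p_2+v)=u_2B/B(0)$ is much larger where $F>0$. Bounding it below by $w(t_*)$ discards exactly this information.

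Your fallback, ``$w\ge B^2/B(0)^2$ type growth since $p_2\ge v$ eventually'', is circular. The inequality $p_2\ge v$ near $T$ is the assertion itself. Moreover, for $a>1$ one has $p_2<v$ on an initial interval, since $\dot\delta_2(0)=2-2a<0$.

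The paper keeps the weight by arguing by contradiction, which uses the one-crossing structure in the opposite direction: if $\delta_2(T)\le0$, then $\delta_2<0$ on all of $(0,T)$. The argument then runs as follows.
\begin{itemize}
\item It brings in the explicit mode-one Jacobi field $g_1'$. Its logarithmic derivative satisfies $p_1<v$ and, by the free boundary condition $g_1'(S)=c\,g_1(S)$, has the exact endpoint value $p_1(T)=2B_0^2/v_0-v_0$.
\item The difference $\omega=p_2-p_1$ solves $\dot\omega+(p_1+p_2)\omega=3$ with $p_1+p_2<2v$. Hence $\exp\bigl(-\int_\tau^T(p_1+p_2)\bigr)>B(\tau)^2/B_0^2$, which yields $\omega(T)>3B_0^{-2}\int_0^SB\,\dd s$.
\item On the other hand, $\omega(T)\le v_0-p_1(T)=2(B_0^2-K^2)/(B_0^2v_0)$.
\item These two bounds contradict $\mathcal I(S)>0$. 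That positivity follows from the factorization of $\mathcal I'$, from $S>s_*$ (Lemma~\ref{lem:boundary}), and from the explicit lower bound on $\mathcal I(s_*)$.
\end{itemize}
The ideas missing from your plan are this comparison with $g_1'$ and the use of the free boundary condition in its exact form, rather than only through the inequality $v_0>\sqrt2K$.
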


\begin{proof}
The Riccati equation for $p_2$ and \eqref{eq:conformal} give
\begin{equation}\label{eq:delta}
\dot\delta_2+(p_2+v)\delta_2=F_2(v),\qquad
\delta_2(0)=0,\qquad F_2(v)=v^2+2-\mathcal D(v).
\end{equation}
Since $(v^2+2)^2-\mathcal D(v)^2=6v^2+4-4a^2$, we have the exact identity
\begin{equation}\label{eq:F2}
F_2(v)=\frac{6v^2+4-4a^2}{v^2+2+\mathcal D(v)}.
\end{equation}
If $a\le1$, then $F_2(v(t))>0$ for every $t>0$.
An integrating factor in \eqref{eq:delta} yields
$\delta_2(T)>0$.

Suppose now that $a>1$. By \eqref{eq:F2}, $F_2(v)$ is
negative for $v^2<2(a^2-1)/3$ and positive for
$v^2>2(a^2-1)/3$. Since $v$ is strictly increasing, the function
\[
M(t)\delta_2(t)=\int_0^t M(\tau)F_2(v(\tau))\dd\tau,\qquad
M(t)=\exp\left(\int_0^t(p_2+v)\dd\tau\right)>0,
\]
first decreases and then increases, if the sign change is
reached. It follows that
\begin{equation}\label{eq:cross}
\delta_2(T)\le0
\quad\Longrightarrow\quad
\delta_2(t)<0\quad(0<t<T).
\end{equation}
If the sign change is not reached before $T$, the same
implication follows from strict decrease throughout the interval.

For a comparison solution, set $w=g_1'$. From the definitions,
\[
w=A'\sinh\varphi+A\varphi'\cosh\varphi>0
\qquad (s\ge0).
\]
The function $w$ is even, and $J_1w=0$ by \eqref{eq:derivative}.
Consequently, for
\[
p_1=\frac{\dot w}{w},\qquad \delta_1=p_1-v,
\]
we have
\[
\dot p_1=2v^2-1-p_1^2,\qquad p_1(0)=0,
\]
and
\[
\dot\delta_1+(p_1+v)\delta_1=v^2-1-\mathcal D(v)<0,\qquad
\delta_1(0)=0.
\]
The strict sign holds because $K>0$, so $\mathcal D(v)>|v^2-1|$.
Thus $p_1<v$ for $t>0$.

Assume, for contradiction, that $\delta_2(T)\le0$.
By \eqref{eq:cross},
$p_1+p_2<2v$ on $(0,T)$. The difference
$\omega=p_2-p_1$ satisfies
\[
\dot\omega+(p_1+p_2)\omega=3,\qquad \omega(0)=0.
\]
Therefore
\begin{align}
\omega(T)
&=3\int_0^T
\exp\left(-\int_\tau^T(p_1+p_2)\dd t\right)\dd\tau
\notag\\
&>\frac3{B_0^2}\int_0^TB(t)^2\dd t
=\frac3{B_0^2}\int_0^SB(s)\dd s.
\label{eq:lower}
\end{align}
Here we used $\dot B/B=v$; the strict inequality holds on
every integration interval with $\tau<T$.

At the boundary, \eqref{eq:gode} and
$g_1'(S)=(v_0/B_0)g_1(S)$ give
\[
p_1(T)=\frac{B_0g_1''(S)}{g_1'(S)}
=\frac{2B_0^2}{v_0}-v_0.
\]
Since $p_2(T)\le v_0$, equation \eqref{eq:Bident} yields
\begin{equation}\label{eq:upper}
\omega(T)\le v_0-p_1(T)
=\frac{2(B_0^2-K^2)}{B_0^2v_0}.
\end{equation}

We now prove that \eqref{eq:lower} and \eqref{eq:upper} are
incompatible. For $s>0$, define
\[
\mathcal I(s)=3\int_0^sB(\sigma)\dd\sigma
-\frac{2(B(s)^2-K^2)}{v(s)}.
\]
Let $x=B^2$ and $\kappa=K^2$. Direct differentiation gives
\begin{align}
\mathcal I'
&=-B+\frac{2(B^2-\kappa)(B+\kappa/B^3)}{v^2}
\notag\\
&=B\left(-1+
\frac{2(x-\kappa)(x^2+\kappa)}{x(x^2+x-\kappa)}\right)
\notag\\
&=B\,
\frac{(x-2\kappa)(x^2-x+\kappa)}{x(x^2+x-\kappa)}.
\label{eq:factor}
\end{align}
For the last step, the numerator expands as
\[
\begin{aligned}
&-x(x^2+x-\kappa)+2(x-\kappa)(x^2+\kappa)\\
&\qquad=x^3-(1+2\kappa)x^2+3\kappa x-2\kappa^2\\
&\qquad=(x-2\kappa)(x^2-x+\kappa).
\end{aligned}
\]
The denominator in \eqref{eq:factor} is positive for $s>0$,
because $x^2+x-\kappa=xv^2>0$.
Since $a>1$, we have $\kappa>\tfrac34$, and
\[
x^2-x+\kappa=(x-\tfrac12)^2+\kappa-\tfrac14>0.
\]
Set
\begin{equation}\label{eq:sstar}
s_*=\tfrac12\arcosh(2a),\qquad
B(s_*)=\sqrt2K,\qquad v(s_*)=\sqrt2a.
\end{equation}
Lemma~\ref{lem:boundary} implies $S>s_*$.
Equation \eqref{eq:factor} shows that $\mathcal I$ is strictly
increasing on $(s_*,S]$.

The identity
\begin{equation}\label{eq:Bbound}
\left(\frac{B'}B\right)^2
=1+\frac1x-\frac{K^2}{x^2}
=\frac{a^2}{K^2}
-\frac{(x-2K^2)^2}{4K^2x^2}
\le\frac{a^2}{K^2}
\end{equation}
follows by expanding the square and using $a^2=K^2+\tfrac14$.
As $B'\ge0$ for $s\ge0$, integration of
$B'\le(a/K)B$ gives
\[
\int_0^{s_*}B\dd s
\ge\frac Ka\left(\sqrt2K-\sqrt{a-\tfrac12}\right).
\]
Using \eqref{eq:sstar}, we conclude that
\begin{equation}\label{eq:Ipositive}
\mathcal I(s_*)\ge
\frac Ka\left(2\sqrt2K-3\sqrt{a-\tfrac12}\right)>0.
\end{equation}
The final strict inequality follows from
\[
8K^2-9(a-\tfrac12)
=(a-\tfrac12)(8a-5)>0.
\]
Thus $\mathcal I(S)>0$. In contrast,
\eqref{eq:lower}--\eqref{eq:upper} imply
\[
3\int_0^SB\dd s
<\frac{2(B_0^2-K^2)}{v_0},
\]
which says $\mathcal I(S)<0$. This contradiction proves
$\delta_2(T)>0$ also when $a>1$.
\end{proof}

\subsection{A positive quadratic form}

\begin{proposition}\label{prop:higher}
Every radial Robin eigenvalue in angular mode $k\ge2$ is
strictly positive.
\end{proposition}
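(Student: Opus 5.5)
The plan is a ground-state (Picone) argument in the conformal coordinate. For $k\ge2$, the quadratic form \eqref{eq:qconformal} exceeds $\qk_2$ by $(k^2-4)\int u^2\dd t\ge0$, so it suffices to prove $\qk_2[u]>0$ for every nonzero $u\in H^1(-T,T)$. Strict positivity of the form gives strict positivity of all eigenvalues by min--max. Monotonicity in $k^2$ then handles every $k\ge2$ at once.

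For the ground state, extend $u_2$ from \eqref{eq:u2} evenly to $[-T,T]$. The coefficient $2v^2+2$ is even in $t$ because $v$ is odd, so this extension is a smooth solution of \eqref{eq:zero} with $k=2$. We have $u_2>0$ on $[-T,T]$, and $p_2=\dot u_2/u_2$ is odd, so $p_2(-T)=-p_2(T)$. For $u\in H^1$, write $u=u_2\psi$. The standard identity, obtained by expanding and integrating the cross term by parts using $\ddot u_2=(2v^2+2)u_2$, gives
\[
\int_{-T}^{T}\bigl(\dot u^2+(2v^2+2)u^2\bigr)\dd t
=\int_{-T}^{T}u_2^2\dot\psi^2\dd t+\Bigl[p_2u^2\Bigr]_{-T}^{T}.
\]
One checks the identity first for smooth $u$ and then extends by density. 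The boundary bracket equals $p_2(T)\bigl(u(T)^2+u(-T)^2\bigr)$. Hence
\[
\qk_2[u]=\int_{-T}^{T}u_2^2\dot\psi^2\dd t+\bigl(p_2(T)-v_0\bigr)\bigl(u(T)^2+u(-T)^2\bigr).
\]

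Both terms are nonnegative, the second by Lemma~\ref{lem:logderivative}. If $\qk_2[u]=0$, then $\dot\psi\equiv0$, so $u=Cu_2$. The boundary term then forces $C^2u_2(T)^2=0$, and since $u_2(T)>0$ we get $C=0$. So $\qk_2$ is positive definite on $H^1\setminus\{0\}$. The lowest Robin eigenvalue of $J_2$ is the minimum of $\qk_2[u]$ divided by the weighted norm \eqref{eq:weight}, and this minimum is attained, so it is strictly positive. The higher eigenvalues and the modes $k>2$ follow.

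The substantive input is entirely Lemma~\ref{lem:logderivative}, so here the main care is technical. One must justify the substitution $u=u_2\psi$ for $H^1$ functions, which is fine since $u_2$ is smooth and bounded below. One must also confirm that the conformal change of variable preserves the form and the boundary term $v_0$. That holds because $\partial_s=B^{-1}\partial_t$ and the boundary factor is $B_0\cdot(v_0/B_0)=v_0$, as already recorded in \eqref{eq:qconformal}.
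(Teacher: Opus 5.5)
Your proof is correct and follows the paper's argument essentially step for step: you extend $u_2$ evenly, use the ground-state identity whose boundary coefficient $p_2(T)-v_0$ is positive by Lemma~\ref{lem:logderivative}, show that $u=Cu_2$ forces $C=0$, and pass to $k\ge2$ via $\qk_k=\qk_2+(k^2-4)\int_{-T}^{T}z^2\dd t$. Nothing is missing.
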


\begin{proof}
Extend $u_2$ evenly to $[-T,T]$. The coefficient in
\eqref{eq:u2} is even, so the extension is smooth, positive,
and solves the same equation. Its logarithmic derivative
$p_2$ is odd. For any $z\in H^1(-T,T)$, writing $z=u_2h$
and integrating
\[
\dot z^2+(2v^2+2)z^2
=u_2^2\dot h^2+\frac{\mathrm d}{\mathrm dt}(p_2z^2)
\]
gives
\begin{equation}\label{eq:ground}
\begin{split}
\qk_2[z]={}&
\int_{-T}^{T}u_2^2
\left(\frac{\mathrm d}{\mathrm dt}\frac z{u_2}\right)^2\dd t\\
&+\bigl(p_2(T)-v_0\bigr)
\bigl(z(T)^2+z(-T)^2\bigr).
\end{split}
\end{equation}
The identity first follows for smooth functions and extends
to $H^1$ by density and continuity of endpoint traces.
Lemma~\ref{lem:logderivative} makes the boundary coefficient
strictly positive. If the right-hand side vanishes, then
$z/u_2$ is constant and $z(T)=z(-T)=0$, which forces $z=0$.
Therefore $\qk_2[z]>0$ for every nonzero $z\in H^1(-T,T)$.

The minimum of the Rayleigh quotient with the positive weight
$B^2\dd t$ in \eqref{eq:weight} is attained for this regular
problem. Hence its first eigenvalue is strictly positive.
Finally,
\[
\qk_k[z]=\qk_2[z]+(k^2-4)\int_{-T}^{T}z^2\dd t
\qquad(k\ge2)
\]
proves the assertion for all higher modes, on the full form
domain and therefore in both parity sectors.
\end{proof}

\section{Proof of the main theorem}\label{sec:conclusion}

\begin{proof}[Proof of Theorem~\ref{thm:main}]
Propositions~\ref{prop:modezero}, \ref{prop:modeone}, and
\ref{prop:higher} give the following contributions. Angular
multiplicities are included in the table.
\begin{center}
\begin{tabular}{@{}lcc@{}}
\toprule
Sector & Index contribution & Nullity contribution\\
\midrule
$k=0$, even & $1$ & $0$\\
$k=0$, odd & $1$ & $0$\\
$k=1$, even & $2$ & $0$\\
$k=1$, odd & $0$ & $2$\\
$k\ge2$, both parities & $0$ & $0$\\
\bottomrule
\end{tabular}
\end{center}
The angular Fourier decomposition is orthogonal for both the
$L^2$ inner product and the quadratic form. Its partial sums converge
in $H^1(\Sigma_a)$, and hence their boundary traces converge in
$L^2(\partial\Sigma_a)$. Thus the decomposition also holds on the
full form domain. The span of the negative eigenfunctions in the
table has dimension four, and its $L^2$-orthogonal complement is
nonnegative for $Q$. A function in the Robin kernel has each Fourier
coefficient in the corresponding radial kernel. The mode counts
therefore give
\[
\ind(\Sigma_a)=4,\qquad
\ker J=\Span\{g_0'(s)\cos\theta,\ g_0'(s)\sin\theta\}.
\]
In particular, $\nul(\Sigma_a)=2$.

To identify the kernel geometrically, consider the spatial
rotation fields
\[
Z_2(X)=(0,-X^2,X^1,0),\qquad
Z_3(X)=(0,-X^3,0,X^1).
\]
They preserve the center $e_0$ and the geodesic ball.
Equation \eqref{eq:normal} gives
\[
\langle Z_2,\nu\rangle_L=g_0'\cos\theta,\qquad
\langle Z_3,\nu\rangle_L=g_0'\sin\theta.
\]
Indeed, $\langle Z_2,\nu\rangle_L=(-B\nu^1+(K/B)g_1)\cos\theta$ and
$\langle Z_3,\nu\rangle_L=(-B\nu^1+(K/B)g_1)\sin\theta$, while
\[
-B\nu^1+\frac K B g_1
=\frac{Bv}{A}\cosh\varphi+\frac K{AB}\sinh\varphi
=g_0'.
\]
These rotations tilt the axis of the catenoid. Rotation about
the axis itself has zero normal component. The two displayed
fields span the entire Robin kernel, as asserted.
\end{proof}

\begin{remark}
For each fixed $a>\tfrac12$, the theorem gives nondegeneracy
modulo the rotations preserving the supporting ball. All
spectral comparisons above are made for that fixed parameter;
no uniform spectral gap as the parameter approaches an
endpoint is required. In the terminology of \cite{BettiolPiccioneSantoro},
$\Sigma_a$ is equivariantly nondegenerate; see Corollary~\ref{cor:equivariant}.
\end{remark}

\section{Consequences}\label{sec:consequences}

\begin{corollary}\label{cor:radius}
The supporting radius $r(a)$ depends smoothly on $a$ and satisfies
\[
\frac{\mathrm dr}{\mathrm da}>0\qquad\text{for every }a>\tfrac12 .
\]
In particular, distinct members of the family $\{\Sigma_a\}_{a>1/2}$
are free boundary in distinct geodesic balls centered at $e_0$.
\end{corollary}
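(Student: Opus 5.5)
The plan is to view the free boundary condition \eqref{eq:FB} as an implicit equation $F(a,S)=0$ with
\[
F(a,S)=v(S)\tanh\varphi(S)-K,
\]
where $v$, $\varphi$, $K$ all depend smoothly on $(a,s)$ for $a>\tfrac12$. Smoothness of $S(a)$ then follows from the implicit function theorem once $\partial_SF\ne0$. Here
\[
\partial_SF=v'\tanh\varphi+v\,\varphi'\operatorname{sech}^2\varphi>0,
\]
because $v'>0$, $v(S)>0$, $\varphi'>0$ and $\varphi(S)>0$. Since $r(a)=\arcosh g_0(S(a);a)$ and $g_0(S)>1$, $r$ is smooth as well.

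For the sign of $\mathrm dr/\mathrm da$ I would use a variational argument that links it to the nondegeneracy in Theorem~\ref{thm:main}. Differentiating the family $X_a$ in $a$ gives a Jacobi field $\psi=\langle\partial_aX,\nu\rangle_L$, which is rotationally symmetric (mode $k=0$) and satisfies $J\psi=0$ in the interior. Differentiating the boundary condition $\langle X_a(S(a),\theta),X_a\rangle$ at distance $r(a)$ from $e_0$, together with orthogonality, should give an inhomogeneous Robin condition of the form
\[
\partial_\eta\psi-c\psi=\lambda\,r'(a)
\]
at $s=\pm S$, where $\lambda\neq0$ is an explicit geometric factor. The tangential part of $\partial_aX$ is absorbed by reparametrization, and I expect $\lambda=\pm1$ after normalization, coming from the ball growing normally to itself.

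If $r'(a_0)=0$ at some $a_0$, then $\psi$ lies in the radial Robin kernel of $J_0$. Proposition~\ref{prop:modezero} says that kernel is trivial, so $\psi\equiv0$, and the family would be stationary to first order in $a$. This should be excluded by an invariant that visibly changes with $a$, for instance the neck value $B(0)^2=a-\tfrac12$ or $K$. Concretely, $\psi(0)$ is computed from $\partial_aX(0,\theta)$ and the normal \eqref{eq:normal}, and $\psi$ cannot vanish identically modulo Killing fields. The only Killing fields that preserve rotational symmetry and $e_0$ are rotations about the axis, which have zero normal component. Hence $r'\ne0$ everywhere. By continuity the sign is constant, and I would fix it from the asymptotics as $a\to\tfrac12^+$ (where the catenoid degenerates and $r\to0$, as in \cite{PigLocal}) or by a direct evaluation. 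The last sentence of the corollary then follows from strict monotonicity, since every ball in question is centered at $e_0$.

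The main obstacle is computing the boundary condition for $\psi$ carefully, including the contribution of $S'(a)$ and the tangential component. A cleaner alternative I would try first avoids this: choose the even solution $\psi$ of $J_0\psi=0$ directly and use the Wronskian with $g_0$ and the identity \eqref{eq:negativeg0}. Even so, it seems necessary to show that $\psi$ fails the homogeneous Robin condition, which is exactly the content of Proposition~\ref{prop:modezero} in the even sector.
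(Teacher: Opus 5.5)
Your outline follows the paper's proof. It uses the implicit function theorem for $S(a)$ and $r(a)$, the parametric Jacobi field $\psi=\langle\partial_aX,\nu\rangle_L$ with $J_0\psi=0$, an inhomogeneous Robin identity at $s=\pm S$, Proposition~\ref{prop:modezero} to exclude $\mathrm dr/\mathrm da=0$, and constant sign by continuity.

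The gap is in the one step that carries the weight. You only expect $\lambda\neq0$, and the mechanism you give (the ball ``growing normally to itself'') is the wrong one. The ball grows in the direction of the conormal $\eta=X_s$, which is tangent to $\Sigma_a$. This growth reaches the normal component $\psi$ only through the bending of $\Sigma_a$ in the direction $\eta$. The correct identity is
\[
\psi'(S)-c\,\psi(S)=-\frac{K}{B_0^2}\,\frac{\mathrm dr}{\mathrm da}.
\]
Its coefficient is $-\II(\eta,\eta)$, not $\pm1$, and it is nonzero only because $K>0$. For equatorial totally geodesic disks in balls of increasing radius one has $\psi\equiv0$ although $\mathrm dr/\mathrm da\neq0$, so no argument using only the ball can give $\lambda\neq0$.

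To derive the identity, set $Y=\partial_aX$ at fixed $(s,\theta)$ and use, at $s=S$:
\begin{itemize}
\item $e_0=\cosh r\,X-\sinh r\,X_s$;
\item $\langle Y,X\rangle_L=0$ and $\langle\partial_a\nu,X\rangle_L=-\psi$;
\item $\langle\partial_a\nu,X_s\rangle_L=-\psi'-(K/B^2)\langle Y,X_s\rangle_L$.
\end{itemize}
Differentiating $g_0(S(a))=\cosh r(a)$ gives $\mathrm dS/\mathrm da+\langle Y,X_s\rangle_L=\mathrm dr/\mathrm da$. Differentiating $\nu^0(S(a))=0$ then gives the identity. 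So the tangential component is not simply absorbed by reparametrization; it combines with $\mathrm dS/\mathrm da$ to produce $\mathrm dr/\mathrm da$.

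Two further ingredients are needed and are not yet in your proposal. First, $\psi$ must be even, which follows from $X(-s,\theta)=RX(s,\theta)$ and $\nu(-s,\theta)=R\nu(s,\theta)$ with $R=\operatorname{diag}(1,-1,1,1)$. This is what gives both conditions in \eqref{eq:Robin} when $\mathrm dr/\mathrm da=0$. Second, you need $\psi\not\equiv0$, which is the direct evaluation $\psi(0)=\tfrac K2\bigl(B(0)^{-2}-A(0)^{-2}\bigr)=1/(2K)$. The discussion of Killing fields is unnecessary, since the radial Robin kernel of $J_0$ is trivial outright.

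Your way of fixing the sign, from $r>0$ and $r(a)\to0$ as $a\to\tfrac12^+$, is valid; this limit is proved independently in Lemma~\ref{lem:radiuslimit}. The paper instead uses Lemma~\ref{lem:boundary} to get $\cosh r\ge\sqrt{1+B_0^2}>\sqrt{1+2K^2}\to\infty$. That rules out a decreasing $r$ using only results already established.

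Finally, the ``cleaner alternative'' cannot replace the boundary identity. An even solution of $J_0\psi=0$ chosen independently of the family carries no information about $r(a)$.
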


\begin{proof}
For $s>0$, the function $\tanh\varphi-K/v$ has $s$-derivative
$\varphi'\operatorname{sech}^2\varphi+Kv'/v^2>0$. By the implicit function theorem, the solution $S$ of \eqref{eq:FB} depends smoothly on $a$, and so does $r=\arcosh g_0(S)$, since $g_0(S)>1$.

All quantities of Section~\ref{sec:geometry} depend smoothly on
$(s,a)$. Let $Y=\partial_aX$, taken at fixed $(s,\theta)$, and set $\psi=\langle Y,\nu\rangle_L$. This is the parametric Jacobi field of \cite[Definition~8.1]{PigLocal}; its evenness, the value $\psi(0)=1/(2K)$ and the boundary identity \eqref{eq:radiusdefect} below are established in \cite[Lemmas~8.2 and~8.3]{PigLocal}, and we include the short
derivations for completeness. Each $X(\cdot,\cdot\,;a)$ is a minimal immersion of $\RR\times(\RR/2\pi\mathbb Z)$, so the normal component of the variation field is a Jacobi field. Since $\psi$ does not depend on $\theta$, this gives $J_0\psi=0$. The Lorentz reflection $R=\operatorname{diag}(1,-1,1,1)$ satisfies $X(-s,\theta)=RX(s,\theta)$
and $\nu(-s,\theta)=R\nu(s,\theta)$ for every $a$; hence $\psi$ is even.

At $s=0$ one has $\varphi=\partial_a\varphi=0$ and $v=0$. Since
$\partial_aA^2=\partial_aB^2=\cosh(2s)$, the vectors $Y$ and $\nu$
at $s=0$ are
\[
\Bigl(\frac1{2A},0,\frac{\cos\theta}{2B},\frac{\sin\theta}{2B}\Bigr)
\quad\text{and}\quad
\Bigl(\frac KA,0,\frac KB\cos\theta,\frac KB\sin\theta\Bigr),
\]
with $A=A(0)$ and $B=B(0)$. Using $A(0)^2-B(0)^2=1$ and $A(0)^2B(0)^2=K^2$, we obtain
\[
\psi(0)=\frac K2\Bigl(\frac1{B(0)^2}-\frac1{A(0)^2}\Bigr)
=\frac1{2K}\neq0.
\]

Now let $s=S$. The boundary relation of Section~\ref{sec:geometry}
gives $e_0=\cosh r\,X-\sinh r\,X_s$; in particular
$\langle X_s,e_0\rangle_L=-\sinh r$. Differentiating
$\langle X,X\rangle_L=-1$, $\langle\nu,X\rangle_L=0$ and
$\langle\nu,X_s\rangle_L=0$ in $a$, and using
$\nu_s=-(K/B^2)X_s$, we find
\[
\langle Y,X\rangle_L=0,\qquad
\langle\partial_a\nu,X\rangle_L=-\psi,\qquad
\langle\partial_a\nu,X_s\rangle_L
=-\psi'-\frac K{B^2}\langle Y,X_s\rangle_L .
\]
Since $\partial_ag_0=-\langle Y,e_0\rangle_L=\sinh r\,\langle Y,X_s\rangle_L$
at $s=S$ and $g_0'(S)=\sinh r$, differentiating
$g_0(S(a))=\cosh r(a)$ and dividing by $\sinh r$ gives
\[
\frac{\mathrm dS}{\mathrm da}+\langle Y,X_s\rangle_L
=\frac{\mathrm dr}{\mathrm da}.
\]
Since $\langle\nu_s,e_0\rangle_L=(K/B_0^2)\sinh r$ at $s=S$,
differentiating the free boundary condition
$\langle\nu,e_0\rangle_L|_{s=S(a)}=-\nu^0(S(a))=0$, and using the decomposition of $e_0$ together with the identities above, gives
\[
\frac K{B_0^2}\sinh r\,\frac{\mathrm dS}{\mathrm da}
-\cosh r\,\psi(S)
+\sinh r\Bigl(\psi'(S)+\frac K{B_0^2}\langle Y,X_s\rangle_L\Bigr)=0 .
\]
Dividing by $\sinh r$ and combining the last two identities, we obtain
\begin{equation}\label{eq:radiusdefect}
\psi'(S)-c\,\psi(S)=-\frac K{B_0^2}\,\frac{\mathrm dr}{\mathrm da}.
\end{equation}

Suppose that $\mathrm dr/\mathrm da=0$ for some $a$. Then
\eqref{eq:radiusdefect} and the evenness of $\psi$ show that $\psi$ satisfies both conditions in \eqref{eq:Robin}. Since $J_0\psi=0$ and $\psi(0)\neq0$, zero would be an eigenvalue of the radial Robin problem for $J_0$, contrary to Proposition~\ref{prop:modezero}. Hence
$\mathrm dr/\mathrm da$ never vanishes and has constant sign. By
Lemma~\ref{lem:boundary},
\[
\cosh r=A(S)\cosh\varphi(S)\ge\sqrt{1+B_0^2}>\sqrt{1+2K^2},
\]
and the right-hand side tends to infinity as $a\to\infty$. Thus $r$
is unbounded as $a\to\infty$, which excludes the negative sign.
\end{proof}

\begin{lemma}\label{lem:radiuslimit}
$S(a)\to0$ and $r(a)\to0$ as $a\to\tfrac12^+$.
\end{lemma}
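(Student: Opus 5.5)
The plan is to bound $S$ from above by a quantity that tends to zero as $K\to0^+$, and then to read off $r\to0$ from $\cosh r=g_0(S)$. Throughout I write $\varepsilon=a-\tfrac12\to0^+$, so that $K^2=\varepsilon(\varepsilon+1)\sim\varepsilon$ and $B(0)^2=\varepsilon$.

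The key step is an upper bound for $S$ from the free boundary condition \eqref{eq:FB}. Since $\tanh\varphi(S)=K/v_0$ and $\tanh\varphi<\varphi$, we get
\[
\frac{K}{v_0}<\varphi(S)=\int_0^S\frac{K}{A^2B}\dd\sigma\le\int_0^S\frac{K}{B}\dd\sigma,
\]
using $A\ge1$. This gives $1<v_0\int_0^S B^{-1}\dd\sigma$, which I will combine with explicit formulas. We have $B(s)^2=a\cosh 2s-\tfrac12\ge\varepsilon+2a\sinh^2 s\ge\varepsilon+s^2$, while $v=a\sinh(2s)/B$.

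For small $s$, the bound $B\ge\sqrt{\varepsilon+s^2}$ gives $\int_0^S B^{-1}\le\operatorname{arsinh}(S/\sqrt\varepsilon)$. Also $v_0\le a\sinh(2S)/\sqrt{\varepsilon+S^2}\le C\,S/\sqrt{\varepsilon+S^2}$, which is at most $C'$ in any case.

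The argument is by contradiction. Suppose $S\ge\sigma_0>0$ along a sequence $\varepsilon\to0$. Then I use instead the bound $K/v_0<\varphi(S)$ with $\varphi(S)\le\varphi(\infty)$, and show that $\varphi(\infty)\to0$ like $K\log(1/K)$. This follows from
\[
\int_0^\infty\frac{K\dd\sigma}{A^2B}\le K\Bigl(\int_0^1\frac{\dd\sigma}{\sqrt{\varepsilon+\sigma^2}}+\int_1^\infty\frac{\dd\sigma}{\sinh\sigma}\Bigr)=O\bigl(K\log(1/\varepsilon)\bigr).
\]
Hence $v_0>K/\varphi(S)$ is at least of order $1/\log(1/\varepsilon)$. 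That is only a weak lower bound on $v_0$, and no contradiction yet.

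The better route is to use the equation directly. On $[0,S]$ with $S\ge\sigma_0$, the quantity $\tanh\varphi(S)\le\varphi(\infty)=O(K\log(1/K))\to0$. On the other hand, $K/v_0\ge K/v(\sigma_0')$ for a fixed bound: as $\varepsilon\to0$, $v(s)\to a\sinh 2s/\sqrt{a\cosh 2s-1/2}$, which stays bounded on compacts. So $K/v_0\ge cK$ would merely compete with $K\log(1/K)$, which again fails.

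I therefore refine the comparison as follows. Since $\varphi(S)\ge K/v_0$ is needed, and $\varphi(S)-\varphi(\sigma)$ for $\sigma$ of order $1$ is only $O(K)$, the logarithmic part of $\varphi$ comes from $s\lesssim\sqrt\varepsilon$. More precisely,
\[
\varphi(S)\approx K\operatorname{arsinh}(S/\sqrt\varepsilon)+O(K),\qquad v_0\approx \frac{S}{\sqrt{\varepsilon+S^2}}\quad\text{for small }S.
\]
Setting $S=\sqrt\varepsilon\,y$, the equation becomes asymptotically $\operatorname{arsinh}(y)\,y/\sqrt{1+y^2}=1$ (using $K\approx\sqrt\varepsilon$ and $\tanh\varphi\approx\varphi$). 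This has a unique root $y_*$, which gives $S\sim y_*\sqrt\varepsilon\to0$.

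The rigorous version of this step, which I expect to be the main obstacle, is to show that the monotone function $\tanh\varphi-K/v$ (increasing by the proof of Corollary~\ref{cor:radius}) is already positive at $s=Y\sqrt\varepsilon$ for a fixed large $Y$ and all small $\varepsilon$. This requires uniform two-sided bounds on $A$, $B$ and $v$ in the scaled variable: $A^2=1+O(\varepsilon)$, $B^2=\varepsilon(1+y^2)(1+O(\varepsilon))$, and $v=y/\sqrt{1+y^2}\,(1+O(\varepsilon))$. With these bounds, positivity at $Y\sqrt\varepsilon$ gives $S<Y\sqrt\varepsilon\to0$.

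Finally, $\cosh r=A(S)\cosh\varphi(S)$ with $A(S)^2=a\cosh 2S+\tfrac12\to1$ and $\varphi(S)\le\varphi(Y\sqrt\varepsilon)=O(K)\to0$. Hence $\cosh r\to1$, that is, $r\to0$.
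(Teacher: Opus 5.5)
Your final argument is correct. It uses the same mechanism as the paper, but evaluates it at a different scale. Both proofs show that the strictly increasing function $F=\tanh\varphi-K/v$, whose unique zero on $(0,\infty)$ is $S$, is positive at a point tending to $0$. Both rely on $\varphi$ carrying a logarithm while $K/v$ stays $O(K)$.

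The paper evaluates $F$ at a fixed $s_1\in(0,1]$. There $K/v(s_1)\le\sqrt{2\epsilon}/c_1$, while $\varphi(s_1)\ge\sqrt\epsilon\,\operatorname{arsinh}(s_1\sqrt{c_3/\epsilon})/(c_2\sqrt{c_3})$ is of order $\sqrt\epsilon\log(1/\epsilon)$, so the logarithm comes from $\epsilon\to0$. You evaluate $F$ at $Y\sqrt\epsilon$, where $\varphi\approx K\operatorname{arsinh}Y$ and $K/v\approx K\sqrt{1+Y^2}/Y$, so the logarithm comes from $Y\to\infty$. The paper's version is purely qualitative. Yours gives the rate $S<Y\sqrt{a-\tfrac12}$. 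Applying your uniform asymptotics on both sides of $y_*$ even gives $S/\sqrt{a-\tfrac12}\to y_*$, where $t_*=\operatorname{arsinh}y_*$ satisfies $t_*\tanh t_*=1$; this is the free boundary condition of the Euclidean critical catenoid, which is a nice consistency check.

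The $1+O(\epsilon)$ expansions you flag as the main obstacle are more than you need. On $[0,Y\sqrt\epsilon]\subset[0,1]$, use the constants $c_2,c_3$ of the paper's proof. The bounds $A^2\le c_2$, $\epsilon\le B^2\le\epsilon+c_3\sigma^2$ and $a\sinh(2s)\ge s$ give the following, with $c_2,c_3$ independent of $Y$:
\begin{itemize}
\item $\varphi(Y\sqrt\epsilon)\ge K\operatorname{arsinh}(Y\sqrt{c_3})/(c_2\sqrt{c_3})$;
\item $\varphi(Y\sqrt\epsilon)\le KY$;
\item $v(Y\sqrt\epsilon)\ge Y/\sqrt{1+c_3Y^2}$.
\end{itemize}
Together with $\tanh x\ge x/2$ on $[0,1]$, this makes $F(Y\sqrt\epsilon)>0$ for one fixed large $Y$ and all small $\epsilon$.

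In a write-up, drop the preliminary paragraphs. The inequality $K/v_0<\varphi(S)$ bounds $S$ from below, not above. Your ``better route'' failed only because it paired an upper bound for $\tanh\varphi(S)$ with a lower bound for $K/v_0$. The opposite pairing is exactly the paper's proof: under $S\ge\sigma_0$ one has $\tanh\varphi(\sigma_0)\le\tanh\varphi(S)=K/v_0\le K/v(\sigma_0)$, and one combines this with a lower bound for $\varphi(\sigma_0)$.
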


\begin{proof}
Put $\epsilon=a-\tfrac12\in(0,\tfrac12)$. Then
$K^2=\epsilon(1+\epsilon)\in(\epsilon,2\epsilon)$ and
$\epsilon+\sinh^2 s\le B(s)^2=\epsilon+2a\sinh^2 s\le\epsilon+2\sinh^2 s$.
Fix $s_1\in(0,1]$ and suppose that $S\ge s_1$. As $v$ is increasing and
$v(s_1)\ge c_1:=\sinh(2s_1)/\bigl(2\sqrt{1/2+2\sinh^2s_1}\bigr)>0$,
\eqref{eq:FB} gives
\[
\tanh\varphi(s_1)\le\tanh\varphi(S)=\frac K{v(S)}\le\frac{\sqrt{2\epsilon}}{c_1}.
\]
On $[0,1]$ one has $A^2\le c_2:=\tfrac32+2\sinh^21$ and, since
$\sinh\sigma\le\sigma\sinh1$ there, $B(\sigma)^2\le\epsilon+c_3\sigma^2$ with
$c_3=2\sinh^21$. Hence
\[
\varphi(s_1)\ge\frac{\sqrt\epsilon}{c_2}\int_0^{s_1}
\frac{\dd\sigma}{\sqrt{\epsilon+c_3\sigma^2}}
=\frac{\sqrt\epsilon}{c_2\sqrt{c_3}}\,
\operatorname{arsinh}\Bigl(s_1\sqrt{c_3/\epsilon}\Bigr),
\]
while $\varphi(s_1)\le K\int_0^1(\epsilon+\sigma^2)^{-1/2}\dd\sigma\to0$.
Since $\tanh x\ge x/2$ on $[0,1]$, the quotient
$\tanh\varphi(s_1)/\sqrt\epsilon$ tends to $+\infty$ as $\epsilon\to0$, which
contradicts the previous display for $\epsilon$ small. Hence $S(a)<s_1$ for
$a$ close to $\tfrac12$, that is, $S(a)\to0$. Finally,
$\varphi(S)\le K\int_0^S B^{-1}\dd\sigma\le\sqrt2\,S\to0$ and
$A(S)^2=1+B(S)^2\to1$, so $\cosh r(a)=A(S)\cosh\varphi(S)\to1$.
\end{proof}

Combined with the limit $r(a)\to0$ as $a\to\frac12^+$ (Lemma~\ref{lem:radiuslimit}; see also \cite{PigRobin}), Corollary~\ref{cor:radius} shows that $a\mapsto r(a)$ is an increasing diffeomorphism of $(\frac12,\infty)$ onto $(0,\infty)$.
Thus every geodesic ball of $\HH^3$ is the supporting ball of an
isometric image of exactly one $\Sigma_a$. Corollary~\ref{cor:radius}
answers the question of the strict monotonicity of $r(a)$; see
\cite[Remark~1.3]{PigLocal}.

\begin{corollary}[Equivariant nondegeneracy and CMC deformations]
\label{cor:equivariant}
For every $a>\tfrac12$, the critical spherical catenoid
$\Sigma_a\subset\mathbb B(r(a))\subset\HH^3$ is equivariantly nondegenerate in the sense of \cite[Definition~2.4]{BettiolPiccioneSantoro}.
Consequently, there exist $\varepsilon=\varepsilon(a)>0$ and a smooth family
$\{\Sigma_{a,h}\}_{|h|<\varepsilon}$ of embedded free boundary annuli in the fixed ball $\mathbb B(r(a))$ such that $\Sigma_{a,0}=\Sigma_a$ and $\Sigma_{a,h}$ has constant mean curvature $h$; here the mean curvature is the trace of the second fundamental form with respect to the unit normal
extending $\nu$. This family is locally unique up to congruence: a free boundary surface in $\mathbb B(r(a))$ sufficiently close to $\Sigma_a$ in $C^{2,\alpha}$, with constant mean curvature $h'$ with respect to the unit
normal close to $\nu$ and with $|h'|$ sufficiently small, is congruent to $\Sigma_{a,h'}$ in the sense of
\cite[Section~2.4]{BettiolPiccioneSantoro}.
\end{corollary}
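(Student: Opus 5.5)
The plan is to verify the hypotheses of the equivariant implicit function theorem of Bettiol, Piccione and Santoro \cite{BettiolPiccioneSantoro} and then read off the CMC family. We take the symmetry group to be the isometries of $\HH^3$ preserving $\mathbb B(r(a))$, namely $O(3)$ acting on the spatial coordinates fixing $e_0$. By \cite[Definition~2.4]{BettiolPiccioneSantoro}, equivariant nondegeneracy means the following. Every Jacobi field satisfying the free boundary Robin condition must be the normal component $\langle Z,\nu\rangle_L$ of a Killing field $Z$ in the Lie algebra of this group. Equivalently, the Robin kernel of $J$ must coincide with the space of such normal components.

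First compute the space of normal components of Killing fields preserving the ball. The Lie algebra of $O(3)$ is spanned by $Z_2$, $Z_3$ and the rotation about the $X^1$-axis, $Z_1(X)=(0,0,-X^3,X^2)$. Along $\Sigma_a$, the field $Z_1$ equals $BE_\theta=X_\theta$, which is tangent, so its normal component vanishes. The proof of Theorem~\ref{thm:main} already computed $\langle Z_2,\nu\rangle_L=g_0'\cos\theta$ and $\langle Z_3,\nu\rangle_L=g_0'\sin\theta$. These satisfy the Robin condition, as shown in Proposition~\ref{prop:modeone}.

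Theorem~\ref{thm:main} states that $\ker J=\Span\{g_0'\cos\theta,g_0'\sin\theta\}$. Hence the kernel is exactly the space of normal Killing components, and equivariant nondegeneracy follows. One must also check that the free boundary Jacobi operator in \cite{BettiolPiccioneSantoro} agrees with $J$ with Robin coefficient $c$. This holds because both come from the same second variation formula \eqref{eq:Q}.

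Next, apply the main theorem of \cite{BettiolPiccioneSantoro} on free boundary CMC deformations of equivariantly nondegenerate free boundary minimal surfaces in a fixed domain. It yields $\varepsilon>0$ and a smooth family $\Sigma_{a,h}$ of free boundary CMC-$h$ surfaces with $\Sigma_{a,0}=\Sigma_a$, locally unique up to the group action. Concretely, the family is obtained as normal graphs over $\Sigma_a$ in the slice transverse to the $O(3)$-orbit. Since these graphs are $C^{2,\alpha}$-close to the embedded compact annulus $\Sigma_a$, they are embedded annuli for $\varepsilon$ small. The orientation convention is fixed by taking the normal that extends $\nu$.

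The main obstacle is matching hypotheses. The framework of \cite{BettiolPiccioneSantoro} must accommodate a geodesic ball of $\HH^3$ rather than a Euclidean ball. It must also allow nonzero mean curvature, with the volume constraint or Lagrange multiplier parametrized by $h$. If the theorem is stated for general Riemannian domains with smooth boundary and compact isometry group, both requirements hold directly. The remaining subtle point is that the congruence in the local uniqueness statement is taken with respect to $O(3)$, the ball-preserving group, and not the full isometry group of $\HH^3$.
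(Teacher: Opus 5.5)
Your proposal is correct and follows the same route as the paper. You identify the Killing--Jacobi fields coming from the ball-preserving rotations, use Theorem~\ref{thm:main} to show they exhaust the Robin kernel (equivariant nondegeneracy), and then invoke the Bettiol--Piccione--Santoro deformation theorem with $Z_2,Z_3$ as generators. The paper only adds the routine checks you leave implicit: admissibility in their Definition~2.1, in particular that $\Sigma_a$ separates the ball with $\nu$ pointing into the chosen component $\Omega_1$, which fixes the sign convention for $h$; and the explicit sign $\II^{\partial M}(\nu,\nu)=-\coth r(a)$, which matches their linearized boundary condition with $\partial_\eta u=cu$.
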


\begin{proof}
Fix $a>\tfrac12$, let $M=\mathbb B(r(a))$ carry the hyperbolic metric $g$,
and orient $\Sigma_a$ by $\nu$. We verify the hypotheses of
\cite[Theorem~5.1]{BettiolPiccioneSantoro}.

\emph{The surface.} By Section~\ref{sec:geometry},
$\Sigma_a\cap\partial M=\partial\Sigma_a$; this is condition~(a) of
\cite[Definition~2.1]{BettiolPiccioneSantoro}. Since $\nu$ is a global unit
normal, the normal bundle of $\Sigma_a$ is orientable; this is
condition~(b). Hence $\Sigma_a$ is admissible. At $s=\pm S$ the outward unit
normal of $\partial M$ is $\pm X_s$, which is tangent to $\Sigma_a$; thus
$\Sigma_a$ is normal in the sense of the same definition. Since
$H_1(M;\mathbb Z_2)=0$ and $\Sigma_a$ is a compact, connected, two-sided
surface with $\Sigma_a\cap\partial M=\partial\Sigma_a$, the set
$M\setminus\Sigma_a$ has exactly two connected components, and both have
compact closure. Each geodesic segment in $\{X^1=0\}$ from $e_0$ to
$\partial M$ meets $\Sigma_a$ exactly once, transversally, on the neck circle
$s=0$, and there $\nu$ points away from $e_0$. Hence $e_0$ and the circle
$\partial M\cap\{X^1=0\}$ lie in different components, and, $\Sigma_a$ being
connected, $\nu$ points everywhere into the component $\Omega_1$ that
contains $\partial M\cap\{X^1=0\}$. With this choice of $\Omega_1$,
condition~(c) of \cite[Definition~2.1]{BettiolPiccioneSantoro} holds, so
$\Sigma_a$ bounds a finite volume. Moreover, the unit normal used in
\cite{BettiolPiccioneSantoro}, which points into $\Omega_1$, is $\nu$; hence
the mean curvature in \cite{BettiolPiccioneSantoro}, the trace of the second
fundamental form with respect to this normal, is the one in the statement.

\emph{Killing fields.} The rotations $Z_1(X)=(0,0,-X^3,X^2)$, $Z_2$ and
$Z_3$ of Section~\ref{sec:conclusion} fix $e_0$; hence they are Killing
fields of $g$ tangent to $\partial M$, as required in
\cite[Section~2.4]{BettiolPiccioneSantoro}. Conversely, since $\HH^3$ is
simply connected and has constant curvature, a Killing field defined on a
connected open set is the restriction of a global one. If it is tangent to
$\partial M$ on a nonempty open subset of $\partial M$, then by analyticity
it is tangent to the whole sphere $\partial M$, its flow preserves
$\partial M$, and therefore it vanishes at $e_0$; that is, it lies in
$\Span\{Z_1,Z_2,Z_3\}$. As $\langle Z_1,\nu\rangle_L=0$, the space of
Killing--Jacobi fields along $\Sigma_a$ is
\[
\Span\{\langle Z_2,\nu\rangle_L,\langle Z_3,\nu\rangle_L\}
=\Span\{g_0'\cos\theta,\ g_0'\sin\theta\},
\]
which is two-dimensional because $g_0'>0$ on $(0,S]$.

\emph{Equivariant nondegeneracy.} In \cite{BettiolPiccioneSantoro} the
second fundamental form of $\partial M$ is taken with respect to its outer unit normal; for the geodesic sphere of radius $r(a)$ this gives $\II^{\partial M}(\nu,\nu)=-\coth r(a)$ along $\partial\Sigma_a$. Hence the linearized free boundary condition \cite[(2.3)]{BettiolPiccioneSantoro} is the Robin condition $\partial_\eta u=cu$ of Section~\ref{sec:spectral}, and the space of Jacobi fields in $C^{2,\alpha}_\partial(\Sigma_a)$ is the Robin kernel of Theorem~\ref{thm:main}. By that theorem, it coincides with the
space of Killing--Jacobi fields. Thus $\Sigma_a$ is equivariantly nondegenerate.

\emph{Deformation.} Apply \cite[Theorem~5.1]{BettiolPiccioneSantoro} with $\Lambda=\RR$, the constant family $g_\lambda\equiv g$, so that all $g_\lambda$ define the same normal bundle of $\partial M$, an arbitrary
$\lambda_0\in\Lambda$, $h_0=0$, and the constant frame of Killing--Jacobi field generators $K_1^\lambda=Z_2$, $K_2^\lambda=Z_3$. The theorem gives a smooth family $\Sigma_{(h,\lambda)}$, for $(h,\lambda)$ near $(0,\lambda_0)$,
of free boundary hypersurfaces of $M$ diffeomorphic to $\Sigma_a$, with constant mean curvature $h$ and $\Sigma_{(0,\lambda_0)}=\Sigma_a$, which is unique modulo congruence. Setting $\Sigma_{a,h}=\Sigma_{(h,\lambda_0)}$ for
$|h|<\varepsilon$ proves the existence statement. Since $g_\lambda$ does not depend on $\lambda$, the uniqueness clause of \cite[Theorem~5.1]{BettiolPiccioneSantoro} with $\lambda'=\lambda_0$ is the asserted local uniqueness.
\end{proof}

\begin{remark}\label{rem:BPS}
For the critical catenoid in the Euclidean unit ball, equivariant nondegeneracy and a two-parameter family of free boundary CMC annuli in the unit balls of the space forms of curvature $\lambda$ close to $0$ were obtained by Bettiol, Piccione and Santoro \cite[Lemma~8.1 and Proposition~8.3]{BettiolPiccioneSantoro}; see also \cite[Remark~8.2]{BettiolPiccioneSantoro}. After rescaling, the members with
$\lambda<0$ of that family lie in geodesic balls of $\HH^3$ of small radius. Corollary~\ref{cor:equivariant} applies to every $a>\tfrac12$ and hence, by Corollary~\ref{cor:radius} and the discussion following it, to the critical spherical catenoid in every geodesic ball of $\HH^3$. Embedded, non-rotational free boundary CMC annuli in geodesic balls of $\HH^3$, with
mean curvature $H>1$ in their normalization, were constructed by Cerezo, Fern\'andez and Mira \cite{CerezoFernandezMira}.
\end{remark}

\begin{remark}\label{rem:covers}
The restriction to angular period $2\pi$ in Theorem~\ref{thm:main} cannot be dropped. The radial form \eqref{eq:qrad} is defined for every real $k\ge0$, and
\[
\qk_k[u]-\qk_\ell[u]=(k^2-\ell^2)\int_{-S}^{S}\frac{u^2}{B}\dd s .
\]
Since $B(0)\le B\le B_0$ on $[-S,S]$, the min--max principle gives, for the radial Robin eigenvalues $\lambda_j(k)$,
\[
\frac{k^2-\ell^2}{B_0^2}\le\lambda_j(k)-\lambda_j(\ell)\le\frac{k^2-\ell^2}{B(0)^2}
\qquad(0\le\ell<k).
\]
Hence each $\lambda_j(k)$ is a continuous, strictly increasing function of $k\ge0$.
Propositions~\ref{prop:modeone} and~\ref{prop:higher} give
$\lambda_1(1)<0<\lambda_1(2)$, so there is a unique $k_*(a)\in(1,2)$
with $\lambda_1(k_*(a))=0$. For $0<k<1$, Propositions~\ref{prop:modezero} and~\ref{prop:modeone} give
$\lambda_1(k)<\lambda_2(k)<\lambda_2(1)=0<\lambda_3(0)<\lambda_3(k)$;
for $1<k<2$, one has $\lambda_2(k)>\lambda_2(1)=0$. On the $m$-fold cover of $\Sigma_a$, of angular period $2\pi m$, the angular frequencies are $j/m$ with integer $j\ge0$. Each frequency $j/m\in(0,1)$ therefore contributes four negative directions, so for $m\ge2$ the index of the cover is at least $4m$. Moreover, the nullity of the cover exceeds two exactly when $mk_*(a)$ is an integer, which again requires $m\ge2$. This agrees with the construction of Cerezo~\cite{Cerezo},
in which non-rotational free boundary minimal annuli in geodesic balls of $\HH^3$ bifurcate from $m$-fold covers of critical spherical catenoids (called free boundary hyperbolic catenoids in \cite{Cerezo}). The bifurcating annuli are invariant under $n$ reflections in planes through the axis, where $m$ and $n$ are coprime and $n/m$ lies in an interval contained in $(\sqrt2,\sqrt3)\subset(1,2)$.
\end{remark}

\begin{remark}[Scope of the argument]\label{rem:scope}
Lemmas~\ref{lem:boundary} and~\ref{lem:logderivative} rest on identities specific to the rotational family of Section~\ref{sec:geometry}, and the separation of variables uses its rotational symmetry. Three ingredients nevertheless have a wider scope.

(i) The identity \eqref{eq:normalLap} uses neither the explicit parametrization nor the rotational symmetry: for every two-sided minimal hypersurface $\Sigma^{n}\subset\HH^{n+1}\subset\RR^{n+1,1}$, the Gauss--Weingarten and Codazzi equations give $\Delta_\Sigma\nu=-|\II|^{2}\nu$, hence $J\nu^{0}=n\,\nu^{0}$ for
$J=-\Delta_\Sigma-|\II|^{2}+n$. If $\Sigma$ is free boundary in a geodesic ball centered at $e_{0}$, then $\nu^{0}=0$ on $\partial\Sigma$. What is specific to the present family is
the sign property of Lemma~\ref{lem:normal}: $\nu^{0}$ is positive in the interior, hence it is the first Dirichlet eigenfunction, with eigenvalue $2$; this is the input of
Proposition~\ref{prop:modezero}.

(ii) The last step of Proposition~\ref{prop:higher} is an instance of the ground-state representation for regular Sturm--Liouville problems. Write the Robin conditions as $\partial_\eta z=\sigma z$ at the endpoints. If a positive solution $u$ of the zero-energy equation satisfies $\partial_\eta u/u>\sigma$ at both endpoints, the substitution $z=uh$ writes the quadratic form as a nonnegative interior square plus boundary terms whose coefficients are positive multiples of $\partial_\eta u/u-\sigma$; the form is therefore positive definite. By symmetry the two conditions reduce here to
$p_{2}(T)>v_{0}$, which is Lemma~\ref{lem:logderivative}.

(iii) The proof of Corollary~\ref{cor:radius} reflects a general transversality principle. Let $X_a$ be a smooth family of free boundary minimal hypersurfaces in the geodesic balls of radius $r(a)$ centered at $e_{0}$, parametrized over a fixed manifold with boundary, and let $\psi=\langle\partial_aX_a,\nu\rangle_L$. Then $\psi$ is a Jacobi field, and differentiating the two free boundary conditions gives
\[
\partial_\eta\psi-\coth r\,\psi=-\II(\eta,\eta)\,\frac{\mathrm dr}{\mathrm da}
\qquad\text{on }\partial\Sigma,
\]
where $\eta$ is the outward conormal; here one uses that $\eta$ is a principal direction along $\partial\Sigma$, the supporting sphere being umbilic. For $\Sigma_a$ one has $\II(\eta,\eta)=K/B_0^{2}$, and this is \eqref{eq:radiusdefect}. Hence, if $\mathrm dr/\mathrm da$ vanished, $\psi$ would lie in the Robin kernel. For $\Sigma_a$,
$\psi$ is nonzero since $\psi(0)=1/(2K)$, and Proposition~\ref{prop:modezero} excludes a radial Robin kernel, so $\mathrm dr/\mathrm da\neq0$; the sign is then fixed by the
unboundedness of $r$.
\end{remark}

\end{document}